 \let\mathscr\relax
\theoremstyle{definition}
\newtheorem{defin}{Definition}[section]
\theoremstyle{definition}
\theoremstyle{plain}
\newtheorem{theo}[defin]{Theorem}
\theoremstyle{plain}
\newtheorem{prop}[defin]{Proposition}
\theoremstyle{plain}
\newtheorem{lem}[defin]{Lemma}
\theoremstyle{plain}
\theoremstyle{definition}
\theoremstyle{definition}
\theoremstyle{definition}
\theoremstyle{plain}
\newtheorem{conj}[defin]{Conjecture}
\theoremstyle{definition}
\theoremstyle{definition}
\theoremstyle{plain}
\theoremstyle{plain}
\theoremstyle{plain}
\theoremstyle{definition}
\theoremstyle{plain}
\theoremstyle{definition}
\theoremstyle{definition}
\newtheorem*{defin*}{Definition}
\theoremstyle{definition}
\newtheorem*{ex*}{Example}
\theoremstyle{plain}
\newtheorem*{theo*}{Theorem}
\theoremstyle{plain}
\theoremstyle{plain}
\newtheorem*{conj*}{Conjecture}
\newtheorem*{prop*}{Proposition}
\theoremstyle{plain}
\newtheorem*{lem*}{Lemma}
\theoremstyle{plain}
\newtheorem*{cor*}{Corollary}
\theoremstyle{definition}
\newtheorem*{rmk*}{Remark}
\theoremstyle{definition}
\newtheorem*{exe*}{Exercise}
\theoremstyle{plain}
\newtheorem{theoA}{Theorem}
\theoremstyle{plain}
\theoremstyle{plain}
\newtheorem{conjA}[theoA]{Question}
\theoremstyle{plain}
\theoremstyle{plain}
\theoremstyle{plain}
\numberwithin{equation}{section}
\def\thm@space@setup{%
  \thm@preskip=\parskip \thm@postskip=0pt
}
\setlist[enumerate]{label=(\roman*)}
\def\uch{{\rm Uch}}
\def\cA{{\mathcal{A}}}
\def\cB{{\mathcal{B}}}
\def\cC{{\mathcal{C}}}
\def\cD{{\mathcal{D}}}
\def\cF{{\mathcal{F}}}
\def\cP{{\mathcal{P}}}
\def\bG{{\mathbf{G}}}
\def\cO{{\mathcal{O}}}
\def\GG{{\mathbb{G}}}
\def\cT{{\mathcal{T}}}
\def\Irr{{\rm Irr}}
\def\Hom{{\rm Hom}}
\def\Uch{{\rm Uch}}
\def\CC{{\mathbb{C}}}
\def\ZZ{{\mathbb{Z}}}
\def\FF{{\mathbb{F}}}
\def\LL{{\mathbb{L}}}
\def\TT{{\mathbb{T}}}
\def\QQ{{\mathbb{Q}}}
\def\irr{{\rm Irr}}
\def\op{{\rm op}}
\def\sd{{\rm sd}}
\def\GL{{\rm GL}}
\def\n{{\mathbf{N}}} 
\def\c{{\mathbf{C}}} 
\def\z{{\mathbf{Z}}} 
\def\E{{\mathcal{E}}} 
\def\Deg{{\rm Deg}}
\def\k{{\bf k}}
\def\G{{\mathbf{G}}} 
\def\K{{\mathbf{K}}} 
\def\L{{\mathbf{L}}} 
\def\cL{{\mathcal{L}}}
\def\S{{\mathbf{S}}} 
\def\cS{{\mathcal{S}}}
\def\CL{{\mathcal{L}}} 
\def\CT{{\mathcal{T}}}
\newcommand{\blocktheorem}[1]{%
  \csletcs{old#1}{#1}
  \csletcs{endold#1}{end#1}
  \RenewDocumentEnvironment{#1}{o}
    {\par\addvspace{1.5ex}
     \noindent\begin{minipage}{\textwidth}
     \IfNoValueTF{##1}
       {\csuse{old#1}}
       {\csuse{old#1}[##1]}}
    {\csuse{endold#1}
     \end{minipage}
     \par\addvspace{1.5ex}}
}
\def\blfootnote{\gdef\@thefnmark{}\@footnotetext}
\title{
{\huge\bf On $e$-local structures for $\mathbb{Z}_\ell$-spetses}\\
\author{Damiano Rossi and Jason Semeraro}
\date{}
\blfootnote{\emph{$2010$ Mathematical Subject Classification:} $20$C$20$, $55$P$10$, $55$R$35$.
\\
\emph{Key words and phrases:} transporter categories, orbit spaces, $p$-compact groups, spetses, Dade's conjecture.
\\
This work is partially funded by the EPSRC grant EP/W$028794/1$. The first author is supported by the Walter Benjamin Programme of the DFG - Project number 525464727. We are grateful to Gunter Malle for providing several helpful comments on an earlier version of the manuscript.
}
}
\begin{document}

\renewcommand{\thetheoA}{\Alph{theoA}}

\renewcommand{\thepropA}{\Alph{propA}}

\renewcommand{\theconjA}{\Alph{conjA}}

\maketitle

\begin{abstract}
Let $q$ be a prime power, $\ell$ a prime not dividing $q$, and $e$ the order of $q$ modulo $\ell$. We show that the geometric realisation of the  nerve of the transporter category of $e$-split Levi subgroups of a finite reductive group $G$ over $\FF_q$ is homotopy equivalent to the classifying space $BG$ up to $\ell$-completion. We suggest a generalisation of this equivalence to the setting of $\ZZ_\ell$-reflection cosets and establish a related fact involving the associated orbit spaces. We also establish a Dade-like formula for unipotent characters of $\ZZ_\ell$-spetses inspired by a question of Brou\'{e}.
\end{abstract}

\section{Introduction}\label{sec:intro}

For a poset $\cP$, $\Delta(\cP)$ denotes the \textit{order complex} whose simplices are given by totally ordered chains in $\mathcal{P}$. When $G$ is a finite group and $\ell$ is prime, we denote by $\cS_\ell^*(G)$ the poset of non-trivial $\ell$-subgroups of $G$ and refer to the associated simplicial complex $\Delta(\cS_\ell^*(G))$ as the Brown complex. This encodes important information about the $\ell$-local structure of $G$. In the particular case where $G=\G^F$ is a finite reductive group defined over $\mathbb{F}_q$ and $\ell\nmid q$, the first author has recently shown, under suitable assumptions on $\ell$, that there is a $\G^F$-homotopy equivalence 
\begin{equation}
\label{e:rossi}
\Delta(\cS_\ell^\star(G)) \simeq \Delta(\cL_e^*(\G,F))
\end{equation}
where $e:=e_\ell(q)$ is the order of $q$ modulo $\ell$, and $\cL_e^*(\G,F)$ is the poset of (proper) $e$-split Levi subgroups of $G$ (see \cite[Theorem A]{Ros-Homotopy} and \cite[Theorem A]{Ros-Brauer_pairs}).

In our first result, we establish a  version of \eqref{e:rossi} in which the Brown complex is replaced by the classifying space $BG$ of $G$ and $\cL_e^*(\G,F)$ is replaced by the transporter category $\cT_e(\bG,F)$ of $e$-split Levi subgroups with morphisms induced by $\bG^F$-conjugation (see Definition \ref{def:Transporter category for finite reductive groups}). For a small category $\cC$, we denote by $|\cC|$ the geometric realisation of its nerve, called the \textit{classifying space} of $\cC$ (so that $BG \simeq |\cB(G)|$, where $\cB(G)$ is the category with one object and morphism set $G$). We prove the following result.
 
\begin{theoA}
\label{thm:Main, homotopy equivalence}
Let $\bG$ be a   connected reductive group defined over an algebraically closed field of characteristic $p$ and $F:\G \rightarrow \G$ be a Frobenius endomorphism defining an $\mathbb{F}_q$-structure on  $\G$, for some prime power $q$. Let $G=\G^F$, $\ell$ be a very good prime good for $\G$ not dividing $q$, and $e$ be the order of $q$ modulo $\ell$. Then there is a homotopy equivalence
\[BG^\wedge_\ell \simeq |\cT_e(\G,F)|^\wedge_\ell.\]
\end{theoA}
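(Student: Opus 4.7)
The strategy is to combine Rossi's $\G^F$-equivariant homotopy equivalence (\ref{e:rossi}) with the classical identification of $BG^\wedge_\ell$ as the $\ell$-completion of the classifying space of the transporter category of non-trivial $\ell$-subgroups of $G$.

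First I would apply the Borel construction $(-)_{hG} := EG \times_G (-)$ to (\ref{e:rossi}), obtaining a homotopy equivalence
\[(\Delta(\cS_\ell^*(G)))_{hG} \simeq (\Delta(\cL_e^*(\G,F)))_{hG}.\]
For any $G$-poset $\cP$, the Borel construction $(\Delta(\cP))_{hG}$ is homotopy equivalent to the classifying space of the translation (transporter) category of $\cP$ under the $G$-action -- a standard identification that may be verified directly or recovered from Thomason's theorem on Grothendieck constructions. Applied to the present situation, the left-hand side becomes $|\cT_\ell^*(G)|$, the classifying space of the transporter category of non-trivial $\ell$-subgroups of $G$, while the right-hand side becomes $|\cT_e(\G,F)|$.

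It then remains to compare $|\cT_\ell^*(G)|$ with $BG$. By a classical mod-$\ell$ homology decomposition of $BG$ (going back to Webb's work on subgroup complexes and appearing in many equivalent forms, e.g.\ in Dwyer's subgroup decomposition), the natural functor $\cT_\ell^*(G) \to \cB(G)$ sending every object to $*$ and a morphism $g \colon P \to Q$ to $g$ induces a mod-$\ell$ cohomology isomorphism, hence an equivalence $|\cT_\ell^*(G)|^\wedge_\ell \simeq BG^\wedge_\ell$. Chaining the equivalences yields
\[BG^\wedge_\ell \simeq |\cT_\ell^*(G)|^\wedge_\ell \simeq |\cT_e(\G,F)|^\wedge_\ell.\]

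The main subtlety is not any single step, since each ingredient is standard, but rather the careful alignment of conventions. One must confirm that Rossi's equivalence is $\G^F$-equivariant in the strong sense required for the Borel construction, and that $\cT_e(\G,F)$ (Definition \ref{def:Transporter category for finite reductive groups}) corresponds precisely to the translation category of $\cL_e^*(\G,F)$. In particular, $\G$ itself should \emph{not} be an object of $\cT_e(\G,F)$ -- otherwise it would act as a cone point, forcing $|\cT_e(\G,F)| \simeq BG$ outright and making the $\ell$-completion in the statement redundant. The good and very-good hypotheses on $\ell$ are needed only to invoke Rossi's theorem; the remaining identifications are formal.
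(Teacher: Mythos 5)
Your approach is a genuinely different route from the paper's. Rather than invoking the equivariant equivalence \eqref{e:rossi} as a black box, the paper adapts the method behind it: it constructs an explicit functor $f\colon\cT_\ell^{\cA}(\G^F)^{\op}\to\cT_e(\G,F)$, with $\cA$ the collection of \emph{all} abelian $\ell$-subgroups (trivial one included), sending $A\mapsto\c_\G(\z^\circ(\c_\G^\circ(A))_{\Phi_e})$, and checks via Lemma~\ref{lem:e-split Levi and good primes} that the comma categories $f/\L$ are filtered so that Quillen's Theorem~A applies; the last step is the Broto--Levi--Oliver ampleness criterion (Lemma~\ref{lem:Proof of homotopy equivalence}). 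That said, your version has a real gap, and the caveat you raise at the end is exactly where it lies---but your proposed resolution runs the wrong way. By Definition~\ref{def:Transporter category for finite reductive groups}, the objects of $\cT_e(\G,F)$ are \emph{all} $e$-split Levi subgroups, and $\G$ is one of them (it is the centraliser of the trivial $\Phi_e$-torus). By contrast, $\cL_e^*(\G,F)$ appearing in \eqref{e:rossi} is the poset of \emph{proper} $e$-split Levis. The Borel construction of $\Delta(\cL_e^*(\G,F))$ therefore identifies with the classifying space of the transporter category of proper $e$-split Levis, not with $|\cT_e(\G,F)|$; so as written your chain only proves $BG^\wedge_\ell\simeq|\cT_e^*(\G,F)|^\wedge_\ell$ for the starred category, which is not the claimed statement.

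The worry that $\G$ being an object would ``make the $\ell$-completion redundant'' is a red herring. Since $\G$ is the maximum of the poset of $e$-split Levis, the projection $\cT_e(\G,F)\to\cB(\G^F)$ (sending $\beta_g\mapsto g$) has contractible comma categories, so $|\cT_e(\G,F)|\simeq B\G^F$ by Quillen's Theorem~A; the statement with $\ell$-completion is simply not sharp, but remains correct. Inserting this short observation and noting that both $|\cT_e(\G,F)|^\wedge_\ell$ and $|\cT_e^*(\G,F)|^\wedge_\ell$ are then identified with $BG^\wedge_\ell$ would close the gap in your argument.
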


Note that hereinafter, for a space $X$, we write $X^\wedge_\ell$ for the $\ell$-completion of $X$ in the sense of Bousfield--Kan \cite{Bou-Kan72}. The proof of Theorem \ref{thm:Main, homotopy equivalence} is an adaptation of the argument used to prove \cite[Theorem A]{Ros-Homotopy} and relies on Quillen's Theorem A (see Theorem \ref{thm:Quillen theorem A}), the fact that the collection of abelian $\ell$-subgroups of $G$ is \textit{ample} in the sense of Dwyer (see Lemma \ref{lem:Proof of homotopy equivalence}), and a description of $e$-split Levi subgroups as centralisers of certain abelian $\ell$-subgroups (see Lemma \ref{lem:e-split Levi and good primes}).

Recent work of the second author, Kessar and Malle suggests that information about the $\ell$-local structure of $\bG^F$ is determined by the pair $(\GG,q)$ where $\GG$ is a certain $\ZZ_\ell$-reflection coset associated to $(\G,F)$. Here, a \textit{$\ZZ_\ell$-reflection coset} is a pair $\GG=(W\phi,V)$ in which $(W,V)$ is a $\ZZ_\ell$-reflection group and $\phi \in N_{\GL(V)}(W)$. If $\ell$ is a very good prime for such a $\GG$ (see \cite[Definition 2.4]{Kes-Mal-Sem}) with $\phi$ of order prime to $\ell$, $(W,V)$ is simply connected, and $q$ is a prime power not divisible by $\ell$, then to $(\GG,q)$ one may associate, via the theory of $\ell$-compact groups, a topological space $B\GG(q)$ with the property that $B\GG(q) \simeq |B\bG^F|^\wedge_\ell$ whenever $\GG$ is a certain $\ZZ_\ell$-reflection coset associated to $\bG^F$ (see Theorem \ref{t:bm}). On the other hand, work of Brou\'{e}--Malle--Michel \cite{Bro-Mal-Mic99} shows that a $\ZZ_\ell$-reflection coset $\GG$ as above also affords a natural set of $e$-split Levi subcosets on which $W$ acts by conjugation (see Section \ref{sec:Conjecture}). We are thus led to introduce a category $\cT_e(\GG)$ analogous to $\cT_e(\bG,F)$ (see Definition \ref{def:Transporter category for reflection cosets}) about which we pose the following question concerning a possible extension of  Theorem \ref{thm:Main, homotopy equivalence}.

\begin{conjA}
\label{conj:Main, homotopy equivalence}
Let $\mathbb{G}=(W\phi,V)$ be a simply connected $\mathbb{Z}_\ell$-reflection coset, $\ell$ a very good prime for $\mathbb{G}$ not dividing the order of $\phi$, $q$ a prime power not divisible by $\ell$, and $e$ the order of $q$ modulo $\ell$. Then, is it true that $B\GG(q) \simeq \left|\cT_e(\mathbb{G})\right|^\wedge_\ell ?$
\end{conjA}

While we are currently unable to give a complete answer to Question \ref{conj:Main, homotopy equivalence}, we are nonetheless able to establish a connection between the \textit{orbit spaces} (see Section \ref{sec:Orbit space, categories}) of categories whose $\ell$-completed classifying spaces determine the left and right hand sides of the above homotopy equivalence. As recalled in Theorem \ref{t:bm}, to $(\GG,q)$ one may associate a centric linking system $\cL_\ell(\GG(q))$ with the property that $|\cL_\ell(\GG(q))|^\wedge_\ell \simeq B\GG(q)$, and so it makes sense to compare the orbit spaces $\cO(\cL_\ell(\GG(q))$ and $\cO(\cT_e(\mathbb{G}))$. It follows from results of Linckelmann that $\cO(\cL_\ell(\GG(q))$ is contractible (see Theorem \ref{t:linck}). Therefore, our next result offers evidence for an affirmative answer to Question \ref{conj:Main, homotopy equivalence}.

\begin{theoA}
\label{thm:Main contractibility}
Let $\mathbb{G}=(W\phi,V)$ be a simply connected $\mathbb{Z}_\ell$-reflection coset,  $\ell$ a very good prime for $\mathbb{G}$ not dividing the order of $\phi$, $q$ a prime power not divisible by $\ell$, and $e$ the order of $q$ modulo $\ell$. Then $\cO(\cT_e(\mathbb{G}))$ is contractible. In particular, we have
\[\cO(\cL_\ell(\GG(q)) \simeq \cO(\cT_e(\mathbb{G})).\]
\end{theoA}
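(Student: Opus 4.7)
The plan is to prove the contractibility of $\cO(\cT_e(\GG))$ by exhibiting the orbit class $[\GG]$ as a terminal object in the orbit category of $\cT_e(\GG)$. The stated homotopy equivalence then follows immediately by combining this with the contractibility of $\cO(\cL_\ell(\GG(q)))$ recorded in Theorem~\ref{t:linck}, since any two contractible spaces are homotopy equivalent.

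The key observation is that $\GG$ is itself an $e$-split Levi subcoset of $\GG$ (indeed the unique maximal one), so it appears as an object of $\cT_e(\GG)$; being $W$-stable, its $W$-orbit $[\GG]$ consists of a single point in the orbit category. For any $e$-split Levi subcoset $\LL$ of $\GG$, I would then unpack the definition from Section~\ref{sec:Orbit space, categories} to compute the morphism set $\Hom_{\cT_e(\GG)}(\LL,\GG)$: this should consist of those $w \in W$ such that $^{w}\LL \subseteq \GG$, which by $W$-stability of $\GG$ is all of $W$. In the orbit category these morphisms are then collapsed by the action of $\aut_{\cT_e(\GG)}(\GG) = W$, leaving a unique morphism $[\LL] \to [\GG]$. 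Hence $[\GG]$ is a terminal object in the orbit category, and therefore its classifying space $\cO(\cT_e(\GG))$ is contractible.

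The principal point to verify carefully is that the definition of $\cO$ adopted in Section~\ref{sec:Orbit space, categories} is compatible with this terminal-object argument — in particular, that the convention for quotienting morphisms matches the computation above. Should the orbit space instead be modelled as the geometric realisation of the $W$-quotient of the poset of $e$-split Levi subcosets (in the spirit of \eqref{e:rossi}), the same conclusion follows from the observation that $\GG$ supplies a $W$-invariant maximum of that poset, making the poset $W$-equivariantly contractible and hence its orbit space contractible as well. I do not foresee a substantive obstacle beyond this reconciliation of definitions; the argument is structural and does not depend on the finer properties of $e$-split Levi subcosets that distinguish the $\ZZ_\ell$-reflection coset setting from the finite reductive group case.
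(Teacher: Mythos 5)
Your proposal takes a genuinely different route from the paper, and the fallback version of it is correct; but the main version as written does not quite align with the paper's definitions. Let me unpack both points.

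Your primary argument exhibits $[\GG]$ as a terminal object in ``the orbit category of $\cT_e(\GG)$'' and concludes contractibility of its classifying space. However, in Section~\ref{sec:Orbit space, categories} the orbit space $\cO(\cC)$ of an $EI$-category is \emph{not} defined as the nerve of an orbit category: it is $\cO(\cC) := |\Delta([S(\cC)])|$, where $S(\cC)$ is the subdivision category whose objects are chains of \emph{non-isomorphisms}, and $[S(\cC)]$ is the poset of isomorphism classes of such chains. In this poset the class of the one-term chain $\{\GG\}$ is neither a maximum nor a minimum: $[\sigma]\leq[\{\GG\}]$ forces $\sigma=\{\GG\}$, while $[\{\GG\}]\leq[\tau]$ requires that $\GG$ appear as a term of $\tau$, which it need not. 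So a terminal-object argument does not apply directly to $\Delta([S(\cC)])$, and you are right to flag that the reconciliation of definitions is the principal point to verify.

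Your fallback argument, by contrast, is valid and in fact gives a shorter proof than the paper's. Rather than reduce to the poset picture informally via \eqref{e:rossi}, you should invoke Lemma~\ref{lem:Isomorphism of posets}, which establishes the homotopy equivalence $\cO(\cT_e(\GG)) \simeq \cO_W(\Delta(L_e(\GG)))$. Now $\GG = \c_\GG(\mathbf{1})$ is itself an $e$-split Levi subcoset (the centraliser of the trivial $\Phi$-torus) and thus is the greatest element of the poset $L_e(\GG)$; moreover ${}^w\GG = \GG$ for every $w \in W$. The natural transformation from the identity functor on $L_e(\GG)$ to the constant functor at $\GG$ therefore induces a $W$-equivariant deformation retraction of $|\Delta(L_e(\GG))|$ onto the $W$-fixed vertex $\{\GG\}$, and this descends to a deformation retraction of the quotient $\cO_W(\Delta(L_e(\GG)))=\sd(\Delta(L_e(\GG)))/W$ onto a point. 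This argument bypasses the paper's Bux--Morse machinery (Lemma~\ref{lem:Morse theory}) entirely, and also avoids the Sylow $\Phi_e$-torus theorem needed in the paper's verification of transitivity on minimal vertices. The paper's heavier approach mirrors the Brown-complex setting (where there is no $G$-fixed maximum and the corresponding contractibility is a genuine theorem of Symonds/Webb); here, since $\GG$ itself lies in the poset and is $W$-fixed, your cone argument suffices.

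Finally, note that the ``in particular'' part of the theorem (the equivalence with $\cO(\cL_\ell(\GG(q)))$) needs Theorem~\ref{t:bm} to produce the $\ell$-local finite group and Theorem~\ref{t:linck} (Linckelmann) to give contractibility of its orbit space; you cite the latter correctly.
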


As an application of the results obtained in \cite{Ros-Homotopy}, the first author has reformulated Dade's Conjecture for finite reductive groups in terms of chains of $e$-split Levi subgroups (see \cite[Theorem E]{Ros-Homotopy} and the statement of \cite[Conjecture C]{Ros24}). This also led to a formulation of Dade's Conjecture for unipotent characters (see \cite{Broue22} and \cite[Theorem B]{Ros-Unip}). We prove a version of the latter result which may be viewed as an analogue of \cite[Theorem B]{Ros-Unip} for a special class of $\ZZ_\ell$-reflection cosets called \textit{$\mathbb{Z}_\ell$-spetses} (see Definition \ref{def:spets}). Attached to a $\ZZ_\ell$-spets is a collection of data which includes analogues of unipotent characters, subdivided into Harish-Chandra series and the like. We write $\k^d_{\rm u}(\GG(q))$  for  the number of unipotent characters with $\ell$-defect $d$ in $\GG(q)$, $\k^d_{\rm u}(\GG(q),\sigma)$ for the number of such characters in the stabiliser of a chain $\sigma$ of $e$-split Levi subcosets of $\GG$, and $\k^d_{\rm u,c}(\mathbb{G}(q))$ for the number of such characters $\chi$ in $\mathbb{G}(q)$ for which $(\GG,\chi)$ is $\Phi_e$-cuspidal (see Definitions \ref{def:Defect in Spetses} and \ref{def:Defect in Spetses, chains}). With these definitions, we can now state our final result.

\begin{theoA}
\label{thm:Main Dade-like}
Let $\mathbb{G}=(W\phi,V)$ be a $\mathbb{Z}_\ell$-spets, $q$ a prime power not divisible by the prime $\ell$, $e$ be the order of $q$ modulo $\ell$ (modulo $4$ if $\ell=2$), and $d$ a non-negative integer. Then
\[\k^d_{\rm u}(\mathbb{G}(q))-\k^d_{\rm u,c}(\mathbb{G}(q))=\sum\limits_{\sigma\in\mathcal{O}\left(\cT_e^\star(\mathbb{G})\right)}(-1)^{|\sigma|+1}\k^d_{\rm u}(\mathbb{G}(q),\sigma)\]
where $\cO(\cT_e^\star(\mathbb{G}))$ is the set of $W$-orbits of (non-trivial) chains of $e$-split Levi subcosets of $\GG$ with fixed largest term $\GG$.
\end{theoA}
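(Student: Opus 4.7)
The plan is to adapt the proof of \cite[Theorem B]{Ros-Unip} to the combinatorial setting of $\mathbb{Z}_\ell$-spetses, replacing $\Phi_e$-Harish-Chandra theory for unipotent characters of $\G^F$ with its axiomatic counterpart for spetses due to Brou\'e--Malle--Michel. Since the unipotent characters of $\GG(q)$ are themselves defined combinatorially, most of the manipulations should carry over from the group-theoretic setting.

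First, I would decompose $\Uch(\GG(q))$ into its $\Phi_e$-Harish-Chandra series. Each unipotent character $\chi$ of $\GG(q)$ belongs to a unique series $\Uch(\GG,\L,\lambda)$ indexed (up to $W$-conjugacy) by a $\Phi_e$-cuspidal pair $(\L,\lambda)$, where $\L$ is an $e$-split Levi subcoset of $\GG$ and $\lambda\in\Uch(\L(q))$ is $\Phi_e$-cuspidal; the cuspidal characters counted by $\k^d_{\rm u,c}(\GG(q))$ are precisely those with $\L=\GG$. The left-hand side of the formula thus reduces to the number of defect-$d$ characters lying in series $\Uch(\GG,\L,\lambda)$ with $\L\subsetneq\GG$. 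Via the Howlett--Lehrer-type bijection $\Uch(\GG,\L,\lambda)\leftrightarrow\Irr(W_\GG(\L,\lambda))$, which is part of the spets axiomatics, this count can be expressed in terms of characters of the relative Weyl groups.

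Next, I would carry out the same bookkeeping for the right-hand side. For each chain $\sigma$ with largest term $\GG$, the quantity $\k^d_{\rm u}(\GG(q),\sigma)$ splits as a sum over $W$-orbits of $\Phi_e$-cuspidal pairs $(\L,\lambda)$, with only those satisfying $\L\subseteq\min(\sigma)$ contributing. After swapping the order of summation, the identity reduces, for each fixed cuspidal pair $(\L_0,\lambda_0)$ with $\L_0\subsetneq\GG$, to a local statement: the alternating sum over $W_\GG(\L_0,\lambda_0)$-orbits of chains of $e$-split Levi subcosets strictly between $\L_0$ and $\GG$ must recover the number of defect-$d$ characters in the series $\Uch(\GG,\L_0,\lambda_0)$. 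Via M\"obius inversion, this is equivalent to a contractibility/Euler characteristic statement for the order complex of the open interval $(\L_0,\GG)$ of $e$-split Levi subcosets, i.e., a relative analogue of Theorem~\ref{thm:Main contractibility} applied to the interval above $\L_0$ inside $\cT_e^\star(\GG)$.

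The main obstacle will be establishing that the $\Phi_e$-Harish-Chandra bijection is compatible with $\ell$-defect in the spets setting. Since defects here are defined combinatorially via generic degree polynomials (Definitions~\ref{def:Defect in Spetses} and~\ref{def:Defect in Spetses, chains}) rather than through modular representation theory, one must verify that the $\Phi_e$-part of the generic degree of a character $\chi\in\Uch(\GG,\L,\lambda)$ is controlled jointly by that of $\lambda$ and that of the corresponding character of $W_\GG(\L,\lambda)$, in analogy with the factorisation known for finite reductive groups. Once this defect-compatibility is established, together with the relative contractibility statement above, the combinatorial and M\"obius-inversion arguments from \cite{Ros-Unip} should transfer essentially verbatim.
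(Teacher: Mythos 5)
Your skeleton is right: decompose $\Uch(\GG)$ into $\Phi$-Harish-Chandra series, convert via the Brou\'e--Malle--Michel bijection to counting irreducible characters of relative Weyl groups, establish that this bijection shifts $\ell$-defect by the constant $d_{q,\ell}(\lambda)$, and then perform a combinatorial cancellation over chains. The defect-compatibility you flag as the ``main obstacle'' is indeed a separate step in the paper (Proposition~\ref{prop:e-HC bijection}), and it is established directly from \cite[Axiom 4.31(2(b))]{Bro-Mal-Mic14} by a short valuation computation; there is nothing subtle there once one uses the degree formula in the axiom.

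The genuine gap is in your final reduction. You propose, after swapping the order of summation, to reduce to ``a contractibility/Euler characteristic statement for the order complex of the open interval $(\L_0,\GG)$.'' This does not go through as stated: for a fixed $\Phi$-cuspidal pair $(\L_0,\lambda_0)$, the summand attached to a chain $\sigma$ is $\left|\irr^{d-d_{q,\ell}(\lambda_0)}\bigl(W_\GG(\sigma,(\L_0,\lambda_0))\bigr)\right|$, and the group $W_\GG(\sigma,(\L_0,\lambda_0))=(\n_W(\L_0)\cap\n_W(\sigma))_{\lambda_0}/W_{\L_0}$ varies with $\sigma$. The alternating sum is therefore not the (reduced) Euler characteristic of a poset, and contractibility of the open interval above $\L_0$ --- or of its orbit space --- would not by itself imply the identity. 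One would need, at minimum, an equivariant refinement in the style of a Lefschetz-type argument, and that is more delicate than what you have written.

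The paper avoids this by an explicit sign-reversing pairing on the pairs $(\sigma,(\L,\lambda))$. Given a chain $\sigma$ with smallest term $\L(\sigma)$ and a $\Phi$-cuspidal pair $(\L,\lambda)$ of $\L(\sigma)$: if $\L<\L(\sigma)$ set $\rho:=\sigma\cup\{\L\}$, and if $\L=\L(\sigma)$ set $\rho:=\sigma\setminus\{\L(\sigma)\}$. Since $\n_W(\rho)\cap\n_W(\L)=\n_W(\sigma)\cap\n_W(\L)$ in both cases, one has $W_\GG(\sigma,(\L,\lambda))=W_\GG(\rho,(\L,\lambda))$ exactly, so the two contributions agree while $|\sigma|$ and $|\rho|$ differ by one; they cancel. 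The only pairs that survive (because removing $\L(\sigma)$ would produce the excluded chain $\{\GG\}$) are $\sigma_\L=\{\L<\GG\}$ with cuspidal pair supported on $\L$ itself, and then $W_\GG(\sigma_\L,(\L,\lambda))=W_\GG(\L,\lambda)$, so Proposition~\ref{prop:e-HC bijection} and the partition \eqref{eq:Partition e-HC} give the left-hand side. If you replace your M\"obius/contractibility step by this pairing, the rest of your outline matches the paper's proof.
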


By analogy with the situation for finite reductive groups, there is some hope that Theorem \ref{thm:Main Dade-like} might  be related  to \cite[Conjecture 2.12]{KLLS}, an analogue of Dade's conjecture for a fusion system $\cF$, in the special case where $\cF=\cF_\ell(\GG(q))$.

The paper is organised as follows. We start by recalling some relevant definitions from  $\ell$-local group and homotopy theory in Section \ref{sec:llocal}. Next, we recall some key concepts in the theory of finite reductive groups necessary to introduce the transporter system of $e$-split Levi subgroups of such a group. An analogous object is then defined in the context of $\ZZ_\ell$-reflection cosets in Section \ref{sec:Conjecture}. We prove Theorems \ref{thm:Main, homotopy equivalence} and \ref{thm:Main contractibility} in Section \ref{sec:Homotopy equivalence} relying on some standard facts about orbit spaces which are recalled in the Appendix. Finally, in Section \ref{sec:Dade for reflection groups} we introduce some invariants associated to chains of $e$-split Levi subcosets and prove Theorem \ref{thm:Main Dade-like}.

\section{Preliminaries}

\subsection{Fusion, linking and transporter categories}\label{sec:llocal}

We will require the following definition from \cite{Bro-Lev-Oli03}.

\begin{defin}
\label{def:transportercat}
Let $\ell$ be a prime and $G$ be a finite group. The \textit{transporter category} $\CT_\ell(G)$ is the category  whose objects are the $\ell$-subgroups of $G$. Given two $\ell$-subgroups $P$ and $Q$ of $G$, we define
\[\Hom_{\CT_\ell(G)}(P,Q) = \n_G(P,Q):=\left\lbrace x\in G\enspace\middle|\enspace P^x\leq Q\right\rbrace.\]
The morphism in $\Hom_{\CT_\ell(G)}(P,Q)$ corresponding to an element $x \in \n_G(P,Q)$ will be denoted by $\alpha_x$.  More generally, given a collection $\mathcal{S}$ of $\ell$-subgroups of the group $G$ closed under conjugation, we denote by $\CT_\ell^{\mathcal{S}}(G)$ the full subcategory of $\CT_\ell(G)$ whose objects belong to $\mathcal{S}$.
\end{defin}

We have the following result.

\begin{lem}
\label{lem:Proof of homotopy equivalence}
For $G$ and $\ell$ as above, let $\mathcal{A}$ be the set of abelian $\ell$-subgroups of $G$. Then there is a homotopy equivalence
\[BG^\wedge_\ell\simeq \left|\CT_\ell^\mathcal{A}(G)\right|^\wedge_\ell.\]
\end{lem}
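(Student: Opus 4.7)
The plan is to prove the stronger statement that $BG\simeq|\CT_\ell^\mathcal{A}(G)|$ already holds as an ordinary homotopy equivalence, from which the stated $\ell$-completed version is then immediate.

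Since $\n_G(\{1\},\{1\})=G$, the trivial subgroup $\{1\}$ is an object of $\CT_\ell^\mathcal{A}(G)$ whose endomorphism group equals $G$, so the full subcategory on this object is canonically isomorphic to $\cB(G)$. Let $i\colon\cB(G)\hookrightarrow\CT_\ell^\mathcal{A}(G)$ denote the corresponding inclusion. I would apply Quillen's Theorem A to $i$. For each object $P\in\mathcal{A}$, the comma category $i\downarrow P$ has objects the morphisms $\{1\}\to P$ in $\CT_\ell^\mathcal{A}(G)$, that is, the elements of $\n_G(\{1\},P)=G$ (every conjugate of $\{1\}$ is contained in $P$). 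Given $g_1,g_2\in G$, unwinding the composition law in the transporter category shows that there is exactly one morphism $g_1\to g_2$ in $i\downarrow P$, corresponding to the element $g_2^{-1}g_1\in G=\mathrm{End}_{\cB(G)}(*)$. Hence $i\downarrow P$ is the indiscrete (chaotic) category on the set $G$, and its nerve is contractible.

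By Quillen's Theorem A, the functor $|i|\colon BG\to|\CT_\ell^\mathcal{A}(G)|$ is a homotopy equivalence; applying Bousfield--Kan $\ell$-completion to both sides then yields the lemma. The only delicate point is the computation of $i\downarrow P$, which reduces to unwinding composition in $\CT_\ell^\mathcal{A}(G)$; note that the abelianness of elements of $\mathcal{A}$ is not actually needed, and the same argument establishes $BG\simeq|\CT_\ell^\mathcal{S}(G)|$ for any conjugation-closed collection $\mathcal{S}$ of $\ell$-subgroups containing $\{1\}$. Accordingly I do not foresee any serious obstacle in this proof; should the intended statement implicitly exclude the trivial subgroup, one would instead appeal directly to Dwyer's theorem identifying $|\CT_\ell^\mathcal{A}(G)|$ with the Borel construction $EG\times_G|\mathcal{A}|$ and invoke the ampleness of the collection of non-trivial abelian $\ell$-subgroups.
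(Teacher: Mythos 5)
Your proof is correct, and it in fact establishes the stronger uncompleted statement $BG\simeq|\CT_\ell^\mathcal{A}(G)|$. The key observation is that the trivial subgroup $\{1\}$ lies in $\mathcal{A}$ (the paper reserves the star notation, as in $\cS_\ell^*(G)$, for the non-trivial collection, and the proof of Theorem~A genuinely needs $\{1\}\in\mathcal{A}$ when $\z^\circ(\G)^F_\ell$ is trivial, e.g.\ for $\G$ semisimple). Your comma-category computation is right: $i/P$ is indiscrete on the set $\n_G(\{1\},P)=G$, hence contractible, and Quillen's Theorem~A applies. The paper instead deduces the lemma by citing \cite[Proposition 1.1, Lemma 1.2, Lemma 1.3]{Bro-Lev-Oli03}, i.e.\ by identifying $|\CT_\ell^\mathcal{A}(G)|$ with the Borel construction $EG\times_G|\mathcal{A}|$ and invoking the ampleness machinery of Dwyer and Broto--Levi--Oliver; that route has the virtue of also covering the non-trivial collection $\mathcal{A}\setminus\{\{1\}\}$ (where only the $\ell$-completed equivalence holds, as $|\mathcal{A}\setminus\{\{1\}\}|$ need not be $G$-contractible), and is the one you correctly flag as the fallback. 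Since $\{1\}\in\mathcal{A}$ makes the nerve of the poset $\mathcal{A}$ $G$-equivariantly contractible, the BLO machinery is strictly more than is needed here, and your direct argument is both more elementary and proves more. One small caveat: the precise element realising the unique morphism in $i/P$ depends on whether one declares $\alpha_y\circ\alpha_x=\alpha_{xy}$ or $\alpha_{yx}$ in the transporter category and on how $i$ is chosen to identify $\cB(G)$ with the endomorphisms of $\{1\}$; your formula $g_2^{-1}g_1$ reflects one such choice, but uniqueness (hence indiscreteness of the comma category) holds regardless, so the conclusion is unaffected.
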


\begin{proof}
This follows from \cite[Proposition 1.1, Lemma 1.2, and Lemma 1.3]{Bro-Lev-Oli03}.
\end{proof}

If $S$ is a finite $\ell$-group, a \textit{saturated fusion system} on $S$ is an $EI$ category of $\ell$-subgroups of $S$ satisfying certain axioms which hold whenever $S$ is a Sylow $\ell$-subgroup of a finite group $G$ and $\cF=\cF_S(G)$ is the category with morphism sets
\[\Hom_{\cF_S(G)}(P,Q)=\{c_g: P \rightarrow Q \mid P^g \le Q, g \in G\} \cong \n_G(P,Q)/\c_G(P)\]
for all $P,Q \le S$ (where $c_g$ denotes $G$-conjugation). The reader is referred to \cite{Asc-Kes-Oli} for the precise definition of an abstract saturated fusion system and related concepts. It is a celebrated result of Chermak that to $\cF$ it is always possible to associated another $EI$-category $\cL$ called its \textit{centric linking system}, with the property that $|\cL|^\wedge_\ell \simeq BG^\wedge_\ell$ in the case where $\cF=\cF_S(G)$ as above. For this reason the triple $(S,\cF,\cL)$ is sometimes referred to as an $\ell$-local finite group with classifying space $|\cL|^\wedge_\ell$. We shall need a particular result concerning the \textit{orbit space} $\cO(\cL)$ (see Section \ref{sec:Orbit space, categories}).

\begin{theo}
\label{t:linck}
If $(S,\cF,\cL)$ is an $\ell$-local finite group then $\cO(\cL)$ is contractible.
\end{theo}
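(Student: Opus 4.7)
The plan is to identify $\cO(\cL)$ with a poset admitting a unique maximum element, from which contractibility follows by a standard cone argument.

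First, I would recall from the Appendix that for a small $EI$-category $\cC$, the orbit space $\cO(\cC)$ may be identified with the poset of isomorphism classes $[X]$ of objects of $\cC$, partially ordered by declaring $[X]\le [Y]$ whenever there exists a morphism in $\cC$ from some representative of $[X]$ to $Y$. The key facts about a centric linking system $\cL$ associated to a saturated fusion system $\cF$ on a finite $\ell$-group $S$ that I would then invoke are the following: $\cL$ is an $EI$-category whose objects are the $\cF$-centric subgroups of $S$; $S$ itself is $\cF$-centric; two objects $P,Q$ of $\cL$ are $\cL$-isomorphic if and only if they are $\cF$-conjugate; and there exists a morphism $P\to Q$ in $\cL$ precisely when $P$ is $\cF$-conjugate to a subgroup of $Q$. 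The latter equivalence follows from the canonical projection $\pi\colon\cL\to\cF$ together with the existence of distinguished inclusion morphisms in $\cL$.

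Combining these observations, $\cO(\cL)$ becomes the poset of $\cF$-conjugacy classes of $\cF$-centric subgroups of $S$, ordered by subconjugacy. Since every $\cF$-centric subgroup $P$ is by definition a subgroup of $S$, the inclusions $P\hookrightarrow S$ yield $[P]\le [S]$ for all classes $[P]\in\cO(\cL)$, so $[S]$ is the unique maximum element of this poset. A standard fact from combinatorial topology then closes the argument: the order complex of any poset with a unique maximum is contractible, the identity map being canonically homotopic to the constant map at the maximum via the cone homotopy sending each simplex $\sigma$ through $\sigma\cup\{[S]\}$ and retracting onto $[S]$.

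The only substantive obstacle is semantic: one must check that the definition of orbit space recorded in the Appendix really yields the poset just described. Should the Appendix instead define $\cO(\cL)$ as the nerve of a richer quotient category of $\cL$ (in the spirit of Linckelmann's orbit category of a fusion system, where morphisms $P\to Q$ are taken modulo $\Inn(Q)$), the argument is easily adapted by exhibiting $[S]$ as a (weakly) terminal object of the quotient category and invoking the corresponding contractibility of its classifying space. In either reading the contraction is anchored at the class of $S$.
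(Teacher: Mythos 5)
Your argument rests on a misreading of the paper's Definition~\ref{def:orbspace}. The orbit space of an $EI$-category $\cC$ is defined there as $\cO(\cC):=|\Delta([S(\cC)])|$, where $S(\cC)$ is the \emph{subdivision category} whose objects are chains of non-isomorphisms in $\cC$, and $[S(\cC)]$ is the poset of isomorphism classes of such chains. It is \emph{not} the order complex of the poset $[\cC]$ of isomorphism classes of objects of $\cC$. You flag exactly this concern (``the only substantive obstacle is semantic'') but then proceed as if it were harmless; it is not. Your alternative reading, identifying $\cO(\cL)$ with the nerve of the quotient category $\cL$ modulo inner automorphisms, is also not what the Appendix defines.

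The distinction is fatal to your argument. In the poset $[S(\cL)]$ there is no terminal (or initial) object: a morphism $\sigma\to\tau$ in $S(\cL)$ embeds the chain $\sigma$, up to isomorphism, as a subchain of $\tau$, so a terminal object would be a single chain containing representatives of every chain of $\cF$-centric subgroups up to conjugacy, which does not exist once there is any branching in the subgroup lattice. Likewise the length-zero chain $\{S\}$ is neither maximal nor minimal in $[S(\cL)]$: it maps only to chains actually containing $S$, and only $\{S\}$ maps to it. So the cone-on-a-terminal-object argument simply does not apply to $\cO(\cL)$, and the contractibility of $|\Delta([\cL])|$ (which is indeed easy, since $[S]$ is the unique maximum of that poset) does not give the contractibility of $|\Delta([S(\cL)])|$. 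This is precisely why the paper's proof is a citation rather than a cone argument: Linckelmann first relates $\cO(\cL)$ to the orbit space of the underlying fusion system (\cite[Proposition 1.5]{Lin05}) and then proves the latter contractible by a nontrivial combinatorial-topological argument (\cite[Theorem 1.1]{Lin09}). A correct self-contained proof would have to reproduce this, not appeal to a unique maximum that the relevant poset does not possess.
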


\begin{proof}
This follows by combining  \cite[Proposition 1.5]{Lin05} and \cite[Theorem 1.1]{Lin09}.
\end{proof}

\subsection{Finite reductive groups}
\label{sec:red}

Let $\G$ be a connected reductive group defined over an algebraically closed field of characteristic $p$, $F:\G\to \G$ a Frobenius endomorphism endowing the variety $\G$ with an $\mathbb{F}_q$-structure, where char$(\FF_q)=p$, and denote by $\G^F$ the finite reductive group consisting of the $\mathbb{F}_q$-rational points on $\G$. We denote by $P_{(\G,F)}(x)\in\mathbb{Z}[x]$ the order polynomial of $(\G,F)$ as defined in \cite[Definition 1.6.10]{Gec-Mal20}. Given a positive integer $e$, we say that an $F$-stable torus $\S$ of $\G$ is a $\Phi_e$-torus if it satisfies $P_{(\S,F)}(x)=\Phi_e^m$ for some non-negative integer $m$ where $\Phi_e$ denotes the $e$-th cyclotomic polynomial. The centraliser $\c_\G(\S)$ of a $\Phi_e$-torus is called an \textit{$e$-split} Levi subgroup of $(\G,F)$. For $(\G,F)$ as above, the \textit{transporter category} of $e$-split Levi subgroups is defined as follows.

\begin{defin}
\label{def:Transporter category for finite reductive groups}
Define $\cT_e(\G,F)$ to be the category whose objects are the $e$-split Levi subgroups of $(\G,F)$ and with morphism sets given by
\[\Hom_{\cT_e(\G,F)}(\L,\K)=\n_{\G^F}(\L,\K):=\left\lbrace g\in \G^F\enspace\middle|\enspace \L^g\leq \K\right\rbrace\]
for any $e$-split Levi subgroups $\L$ and $\K$ of $(\G,F)$. We will denote by $\beta_g:\L\to \K$ the morphism in $\cT_e(\G,F)$ to which an element $g\in \n_{\G^F}(\L,\K)$ corresponds.
\end{defin}

For some of the arguments presented below, we will need to impose some restrictions on the prime $\ell$.

\begin{defin}
\label{def:Very good primes}
For a finite reductive group $(\G,F)$ defined over $\mathbb{F}_q$, let $\gamma(\G,F)$ be the set of primes $\ell$ such that $\ell$ is odd and good for $\G$, $\ell$ does not divide $q$ nor the index $|\z(\G)^F:\z^\circ(\G)^F|$. Denote by $(\G^*,F^*)$ a pair in duality with $(\G,F)$ and set $\Gamma(\G,F):=(\gamma(\G,F)\cap \gamma(\G^*,F^*))\setminus\{3\}$ if $\G_{\rm ad}^F$ has a component of type $^3\mathbf{D}_4(q^m)$ and $\Gamma(\G,F):=\gamma(\G,F)\cap \gamma(\G^*,F^*)$ otherwise.
\end{defin}

Suppose that $\ell$ does not divide $q$ and denote by $e:=e_\ell(q)$ the (multiplicative) order of $q$ modulo $\ell$. When $\ell$ belongs to the set $\Gamma(\G,F)$ one can establish a strong connection between the $e$-split Levi subgroups of $(\G,F)$ and certain abelian $\ell$-subgroups of $\G^F$. We collect some of these properties in the following lemma.

\begin{lem}
\label{lem:e-split Levi and good primes}
Let $\ell\in\Gamma(\G,F)$. Then the following properties hold.
\begin{enumerate}
\item If $\L$ is an $F$-stable Levi subgroup of $\G$, then $\L$ is $e$-split if and only if $\L=\c_\G^\circ(\z^\circ(\L)_\ell^F)$. Furthermore, this is the case if and only if $\L=\c_\G(\z(\L)_{\Phi_e})$.
\item If $A$ is an abelian $\ell$-subgroup of $\G^F$, then $\c_\G(\z^\circ(\c_\G^\circ(A))_{\Phi_e})$ is an $e$-split Levi subgroup of $(\G,F)$.
\end{enumerate}
\end{lem}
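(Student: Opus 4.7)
The plan is to treat the two equivalences of (i) separately before turning to (ii). For the second equivalence in (i), my strategy is to appeal to the classical characterisation of $e$-split Levi subgroups: by definition $\L$ is $e$-split iff $\L=\c_\G(\S)$ for some $\Phi_e$-torus $\S$, and in that case $\S$ is contained in the unique maximal $\Phi_e$-subtorus $\z^\circ(\L)_{\Phi_e}$ of $\z^\circ(\L)$. A sandwich argument then forces $\L=\c_\G(\z^\circ(\L)_{\Phi_e})$ (see \cite{Gec-Mal20} or \cite{Bro-Mal-Mic99}). Connectedness of this centraliser is automatic since $\G$ is connected reductive and we are centralising a connected torus; the notational distinction between $\z(\L)$ and $\z^\circ(\L)$ is immaterial after taking $\Phi_e$-parts, as tori are connected.

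For the first equivalence in (i), I would fix a maximal $F$-stable torus $\T\leq\L$ containing $\z^\circ(\L)$ and compare the root subsystems of $\c_\G(\z^\circ(\L)_{\Phi_e})$ and $\c_\G^\circ(\z^\circ(\L)_\ell^F)$ inside the root system of $(\G,\T)$. Since $e=e_\ell(q)$ and $\ell\nmid q$, the $\ell$-Sylow subgroup $\z^\circ(\L)_\ell^F$ is contained in $\z^\circ(\L)_{\Phi_e}$, which gives one inclusion of the corresponding root subsystems. For the reverse inclusion I would argue that every root $\alpha$ nontrivial on $\z^\circ(\L)_{\Phi_e}$ restricts nontrivially to the finite subgroup $\z^\circ(\L)_\ell^F$; this reduces to showing that $\alpha(\z^\circ(\L)_\ell^F)$ is a nontrivial $\ell$-subgroup of $\overline{\FF}_q^\times$, which is secured by the hypothesis $\ell\in\Gamma(\G,F)$ --- in particular by $\ell$ being odd, good for $\G$, and coprime to $|\z(\G)^F:\z^\circ(\G)^F|$. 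Connectedness of $\c_\G(\z^\circ(\L)_\ell^F)$ then follows from a Steinberg-type connectivity statement, using that $\ell$ avoids the torsion primes of $\G$.

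For part (ii), every element of $A$ is semisimple since $\ell\neq p$. Under the assumption $\ell\in\Gamma(\G,F)$ it is standard that $\c_\G^\circ(A)$ is a Levi subgroup of $\G$. Consequently $\z^\circ(\c_\G^\circ(A))_{\Phi_e}$ is, by construction, a $\Phi_e$-torus of $\G$, and therefore its centraliser $\c_\G(\z^\circ(\c_\G^\circ(A))_{\Phi_e})$ is an $e$-split Levi subgroup of $(\G,F)$ by the very definition recalled in Section \ref{sec:red}.

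The main obstacle is the reverse inclusion in the first equivalence of (i): verifying that passing from the central $\Phi_e$-torus $\z^\circ(\L)_{\Phi_e}$ to its finite $\ell$-torsion subgroup $\z^\circ(\L)_\ell^F$ does not enlarge the $\G$-centraliser. This is where the precise definition of $\Gamma(\G,F)$ is indispensable and cannot be bypassed by a purely formal manipulation; the goodness and oddness of $\ell$, together with the coprimality condition on the centre, are exactly what is needed to ensure that every relevant root character of $\T$ detects the finite $\ell$-subgroup.
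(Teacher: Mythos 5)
The paper's proof of this lemma is purely a citation: part~(i) is referred to \cite[Proposition~13.19]{Cab-Eng04} and \cite[Proposition~3.5.5]{Gec-Mal20}, and part~(ii) to \cite[Corollary~1.9(i)]{Ros-Homotopy}. Your proposal reconstructs the underlying argument, and it is correct: the sandwich argument for the second equivalence of~(i), the root-system comparison for the first, and the deduction of~(ii) from the fact that $\c_\G^\circ(A)$ is a Levi (for $\ell$ good) followed by the definition of $e$-split all match what those references do. You also correctly identify the crux --- showing a root nontrivial on $\z^\circ(\L)_{\Phi_e}$ cannot vanish on the finite $\ell$-subgroup $\z^\circ(\L)_\ell^F$ --- as the place where the hypothesis $\ell\in\Gamma(\G,F)$ is genuinely used; that is exactly the heart of \cite[Proposition~13.19]{Cab-Eng04}. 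One small over-reach: the Steinberg-type connectivity remark for $\c_\G(\z^\circ(\L)_\ell^F)$ is not needed, since the statement already takes the connected component $\c_\G^\circ(\cdot)$; the only place full connectivity of a centraliser is automatic is in the second equivalence, where one centralises the connected torus $\z(\L)_{\Phi_e}$. With that caveat, your outline is sound and supplies the proof that the paper delegates to the literature.
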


\begin{proof}
The first statement can be found in \cite[Proposition 13.19]{Cab-Eng04} and \cite[Proposition 3.5.5]{Gec-Mal20} while for the second see \cite[Corollary 1.9 (i)]{Ros-Homotopy} 
\end{proof}

\subsection{$\mathbb{Z}_\ell$-reflection cosets}
\label{sec:Conjecture}

In \cite[Section 6]{Kes-Mal-Sem}, the second author, Kessar and Malle adapted several definitions for complex reflection cosets from  \cite{Bro-Mal-Mic99} and \cite{Bro-Mal-Mic14} to the setting of $\ZZ_\ell$-reflection cosets. We include here only those that we require. 

\begin{defin}
A \emph{$\ZZ_\ell$-reflection coset} $\GG$ is a pair $(W\phi,V)$, where:
\begin{enumerate}
\item $V$ is a $\ZZ_\ell$-lattice of finite rank;
\item $W\le\GL(V)$ is a finite reflection group; and
\item $\phi\in\GL(V)$ is an element of finite order normalising~$W$.
\end{enumerate}
\end{defin}

A \emph{reflection subcoset} of $\GG = (W\phi,V)$ is a $\ZZ_\ell$-reflection coset of the form $\GG' = (W'(w\phi)_{|_{V'}},V')$, where $V'$ is a pure sublattice of $V$, $W'$ is a reflection subgroup of $\n_W(V'){|_{V'}}$ (the restriction to $V'$ of the stabiliser $\n_W(V')$ of $V'$), and $w\phi\in W\phi$ stabilises $V'$ and normalises~$W'$. In this situation we write $\GG' \le \GG$ for short. A \emph{toric $\ZZ_\ell$-reflection coset (or torus)} is a $\ZZ_\ell$-reflection coset with trivial reflection group. The \emph{centraliser} of a torus $\TT = ((w\phi)_{|_{V'}},V')$ of $\GG$ is the $\ZZ_\ell$-reflection coset $\c_\GG(\TT):= (\c_W(V')w\phi,V)$ (note that $\c_W(V')$ is a reflection subgroup of $W$ by Steinberg's theorem). Any such $\ZZ_\ell$-reflection coset is called a \emph{Levi} of~$\GG$. Following \cite[Def.~1.44(nc)]{Bro-Mal-Mic14} the \emph{order (polynomial)} of a $\ZZ_\ell$-reflection coset $\GG=(W\phi,V)$ is defined as
\[|\GG|=|\GG|(x):=x^{N(W)}\prod_{i=1}^r \epsilon_i^{-2}(x^{d_i}-\epsilon_i) \in \ZZ_\ell[x]\]
where $N(W)$ is the number of reflections of $W$ and$\{(d_i,\epsilon_i)\mid 1\le i\le r \}$ is the uniquely determined multiset of generalised degrees of $W\phi$ for its action on $V\otimes\QQ_\ell$. 

Let $\zeta\in\ZZ_\ell^\times$ be an $e$-th root of unity (hence of order dividing $\ell-1$) and let $\Phi=x-\zeta\in\ZZ_\ell[x]$. A torus $\TT$ of $\mathbb{G}$ is a \emph{$\Phi$-torus} (or \textit{$e$-torus}) if its polynomial order is a power of $\Phi$. We say that $\TT$ is a \textit{Sylow $\Phi$-torus} (or \textit{Sylow $e$-torus}) if it is maximal among all $\Phi$-tori. The centralisers of $\Phi$-tori of $\GG$ are called \emph{$\Phi$-split} Levi subcosets (or \textit{$e$-split} Levi subcosets), and we write $L_e(\GG)$ for the poset of all such Levi subcosets under the inclusion relation for $\ZZ_\ell$-reflection cosets defined above. If $\mathbb{G}=(W\phi,V)$ is a $\ZZ_\ell$-reflection coset, $\mathbb{H}=(W'\phi',V')$ is a reflection subcoset of $\GG$ and $w\in W$ we set
\[{^w\mathbb{H}}:=({^wW'}\cdot{^w\phi'},wV')=(wW'\phi w^{-1},wV').\] With this definition, we note the following lemma.

\begin{lem}
\label{l:splitpres}
Let $\mathbb{G}=(W\phi,V)$ be a $\ZZ_\ell$-reflection coset and $\LL=\c_\GG(\TT)$ be a $\Phi$-split Levi subcoset of $\GG$ for some $\Phi$-torus $\mathbb{T}$. Then for each  $u\in W$, ${^u\mathbb{L}}$ is a $\Phi$-split Levi. Moreover, $W$ acts transitively on the set $\{\LL=\c_\GG(\TT) \mid \mbox{$\TT$ is a Sylow $\Phi$-torus} \}$.
\end{lem}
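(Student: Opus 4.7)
The proof splits naturally into two parts. For the first claim, the strategy is pure bookkeeping. Writing $\TT = ((w\phi)_{|V'},V')$ so that $\LL = (\c_W(V')w\phi,V)$, I would verify three things about the conjugate $^u\TT = ((uw\phi u^{-1})_{|uV'}, uV')$: first, it is a torus since its reflection group is trivial; second, it is a $\Phi$-torus because the operators $(w\phi)_{|V'}$ on $V'\otimes\QQ_\ell$ and $(uw\phi u^{-1})_{|uV'}$ on $uV'\otimes\QQ_\ell$ are related by conjugation by $u$, hence share the same generalised degrees, so $|^u\TT| = |\TT|$ is a power of $\Phi$; and third, a direct comparison of the defining formulas for the centraliser and for $^u(-)$, together with the identities $uV = V$ (as $u \in W \le \GL(V)$) and $u\c_W(V')u^{-1} = \c_W(uV')$, yields
\[\c_\GG(^u\TT) \,=\, (\c_W(uV')\cdot uw\phi u^{-1},V) \,=\, {^u\LL}.\]
This proves that $^u\LL$ is a $\Phi$-split Levi subcoset of $\GG$.

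For the second claim, the essential input is a Sylow theorem for $\Phi$-tori of $\ZZ_\ell$-reflection cosets, which asserts that $W$ acts transitively on the set of Sylow $\Phi$-tori of $\GG$. In the setting of complex reflection cosets this is \cite[Theorem 3.4]{Bro-Mal-Mic99}, and the argument adapts to the $\ZZ_\ell$-setting addressed in \cite[Section 6]{Kes-Mal-Sem}. Granting this, for Sylow $\Phi$-tori $\TT_1, \TT_2$ with $\LL_i = \c_\GG(\TT_i)$, one picks $u \in W$ with $^u\TT_1 = \TT_2$ and concludes
\[{^u\LL_1} \,=\, \c_\GG(^u\TT_1) \,=\, \c_\GG(\TT_2) \,=\, \LL_2\]
by the first part, giving the desired transitivity on centralisers.

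The main obstacle is really just the correct invocation of the Sylow theorem in the $\ZZ_\ell$-setting; once this is granted, everything else reduces to unwinding definitions, and I would expect the written proof to be very short.
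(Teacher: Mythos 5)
Your proposal is correct and takes essentially the same approach as the paper: unwinding the definitions of $\c_\GG(-)$ and $^u(-)$ to compute $\c_\GG({^u\TT}) = {^u\LL}$ directly, and then invoking the Sylow theorem for $\Phi$-tori to get transitivity. The only point worth flagging is the citation: the Sylow theorem the paper invokes is \cite[Theorem 5.1]{Bro-Mal-Mic99}, not Theorem 3.4, and the paper applies it directly rather than flagging an adaptation to $\ZZ_\ell$-coefficients as a separate step; your slightly fuller justification that $^u\TT$ is $\Phi$-toric (comparing generalised degrees under conjugation) is a mild elaboration of the paper's terser remark that $^u\TT$ is ``$\Phi$-toric and of the same rank as $\TT$''.
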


\begin{proof}
Writing $\mathbb{T}=((w\phi)_{\mid V'},V')$ and $\mathbb{L}=(\c_W(V')w\phi,V)$, we see that
\[{^u\mathbb{L}}=(\c_W(uV')u(w\phi)w^{-1},V)=\c_\mathbb{G}({^u\mathbb{T}})\]
where ${^u\mathbb{T}}=((uw\phi u^{-1})_{\mid uV'},uV')$ is $\Phi$-toric and of the same rank as $\TT$. We deduce that $^u\LL$ is also a $\Phi$-split Levi subcoset of $\mathbb{G}$. The second statement follows immediately from the Sylow theorem \cite[Theorem 5.1]{Bro-Mal-Mic99}).
\end{proof} 

Lemma \ref{l:splitpres} prompts us to make the following definition.

\begin{defin}
\label{def:Transporter category for reflection cosets}
For a $\ZZ_\ell$-reflection coset $\mathbb{G}=(W\phi,V)$, write $\cT_e(\mathbb{G})$ for the category whose objects are the $e$-split Levi subcosets in $\mathbb{G}$ and with morphisms given by
\[\Hom_{\cT_e(\mathbb{G})}(\mathbb{L},\mathbb{K})=\n_W(\mathbb{L},\mathbb{K}):=\left \lbrace w\in W\enspace\middle|\enspace {^w\mathbb{L}}\leq \mathbb{K}\right\rbrace\]
for every $\mathbb{L}$ and $\mathbb{K}$ in $\cT_e(\GG)$.
\end{defin}

One major advantage that $\ZZ_\ell$-reflection cosets have over $\CC$-reflection cosets is the following result which provides a natural generalisation of $|B\G^F|^\wedge_\ell$ via the theory of $\ell$-compact groups.

\begin{theo}
\label{t:bm}
Let $\GG=(W\phi,V)$ be a simply connected $\mathbb{Z}_\ell$-reflection coset, $\ell$ a very good prime for $\mathbb{G}$ not dividing the order of $\phi$, and $q$ a prime power not divisible by $\ell$. Then  to $(\GG,q)$ one may associate a topological space $B\GG(q)$ with the property that $B\GG(q) \simeq |B\bG^F|^\wedge_\ell$ whenever:
\begin{enumerate}
\item $\bG$ is a connected reductive algebraic group over $\overline{\FF_q}$ with Weyl group $W$ and cocharacter lattice $V_0$;
\item $F:\bG \rightarrow \bG$ is a Frobenius morphism associated to an $\FF_q$-structure corresponding to $\phi \in N_{\GL(V_0)}(W)$; and
\item $\GG=(W\phi,V_0\otimes_\mathbb{Z}\mathbb{Z}_\ell)$.
\end{enumerate}
Moreover $B\GG(q)$ is the classifying space of an $\ell$-local finite group.
\end{theo}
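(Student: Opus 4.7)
The plan is to assemble the theorem from existing results in the theory of $\ell$-compact groups, using the classification of simply connected $\mathbb{Z}_\ell$-reflection groups together with the Broto--Møller construction of Chevalley $\ell$-local finite groups. First, I would invoke the classification of simply connected $\ell$-compact groups due to Andersen--Grodal (for $\ell=2$) and Andersen--Grodal--Møller--Viruel (for odd $\ell$), augmented by Dwyer--Wilkerson's uniqueness results, to associate to $(W,V)$ a simply connected $\ell$-compact group $BX$ whose Weyl group data over $\mathbb{Z}_\ell$ is $(W,V)$. Since $\phi \in N_{\GL(V)}(W)$ has finite order prime to $\ell$, obstruction theory in the $\ell$-complete category yields a self-homotopy equivalence $B\phi : BX \to BX$ realising $\phi$; its uniqueness up to homotopy is exactly where the hypotheses "very good prime" and "$\ell \nmid |\phi|$" enter.

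Next I would construct the unstable Adams operation $\psi^q : BX \to BX$ compatible with $B\phi$, using the existence and uniqueness results of Jackowski--McClure--Oliver for compact Lie groups, extended to $\ell$-compact groups by Broto--Møller (and further by Andersen--Grodal). One then \emph{defines}
\[
B\GG(q) := (BX)^{h(\psi^q \circ B\phi)},
\]
the homotopy fixed points of the composite. To verify the equivalence with $|B\bG^F|^\wedge_\ell$ in the situation (i)--(iii), I would appeal to Friedlander's theorem on étale homotopy of finite Chevalley groups (refined by Broto--Møller) which identifies $(B\bG^F)^\wedge_\ell$ with the homotopy fixed points of the geometric Frobenius on $(B\bG)^\wedge_\ell$; then one identifies $(B\bG)^\wedge_\ell$ with $BX$ via the classification, and the Frobenius with $\psi^q \circ B\phi$ through the prescribed action of $\phi$ on the cocharacter lattice $V_0$ tensored with $\mathbb{Z}_\ell$.

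For the final assertion that $B\GG(q)$ is the classifying space of an $\ell$-local finite group, I would quote the Chevalley-type construction of Broto--Castellana--Grodal--Levi--Oliver: homotopy fixed points of an unstable Adams operation of degree $q$ (twisted by a graph automorphism of order prime to $\ell$) on a connected $\ell$-compact group admit a natural $\ell$-local finite group structure $(S,\mathcal{F},\mathcal{L})$ whose classifying space $|\mathcal{L}|^\wedge_\ell$ is the fixed point space itself. The resulting fusion system $\mathcal{F}_\ell(\GG(q))$ and centric linking system $\mathcal{L}_\ell(\GG(q))$ referenced in the introduction are precisely those produced by this machinery.

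The main obstacle is the existence and essential uniqueness of the pair $(B\phi, \psi^q)$ for a general simply connected $\mathbb{Z}_\ell$-reflection \emph{coset}, rather than a reflection group, together with compatibility of the two operations; this is what forces the numerical hypotheses (very good prime, $\ell \nmid |\phi|$) and rules out certain exceptional small-prime phenomena where the classification of $\ell$-compact groups or its automorphism computation breaks down. A secondary point to check carefully is that the $\ell$-local finite group thus obtained is independent, up to isomorphism, of the choices made in the obstruction-theoretic lift of $\phi$, which follows from rigidity of automorphisms of connected $\ell$-compact groups in the range of primes under consideration.
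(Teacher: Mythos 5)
Your proposal is correct and takes essentially the same route as the paper: the paper's proof is simply a citation to Kessar--Malle--Semeraro (Section 3 and Remark 3.3), whose construction of $B\GG(q)$ via a simply connected $\ell$-compact group, a lift $B\phi$ of $\phi$, an unstable Adams operation $\psi^q$, homotopy fixed points, Friedlander's theorem, and the resulting $\ell$-local finite group is precisely what you have unpacked. One small attribution slip: the Chevalley-type $\ell$-local finite group structure on these homotopy fixed point spaces is due to Broto--M\o ller rather than Broto--Castellana--Grodal--Levi--Oliver, though this does not affect the substance of the argument.
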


\begin{proof}
The space $B\GG(q)$ is constructed via the theory of $\ell$-compact groups (see \cite[Section 3]{Kes-Mal-Sem}). In particular, see \cite[Remark 3.3(a)]{Kes-Mal-Sem}) for the stated connection with $\bG^F$ and \cite[Remark 3.3(b)]{Kes-Mal-Sem}) for the existence of an associated $\ell$-local finite group.
\end{proof}

\section{Proofs of Theorems \ref{thm:Main, homotopy equivalence} and \ref{thm:Main contractibility}}
\label{sec:Homotopy equivalence}

We first prove Theorem \ref{thm:Main, homotopy equivalence}. Our proof relies on the terminology and results introduced in Sections \ref{sec:llocal} and \ref{sec:red}, together with the notion of a filtered category (see Definition \ref{def:filtered}) which we use in our application of Quillen's Theorem A. Recall from the introduction that $|\cC|$ denotes the classifying space of a small category $\cC$.

\begin{proof}[Proof of Theorem \ref{thm:Main, homotopy equivalence}]
Let $\G^F$ be a finite reductive group and $\ell\in\Gamma(\G,F)$. By Lemma \ref{lem:Proof of homotopy equivalence} it suffices to  show that there is  a homotopy equivalence of classifying spaces
\[|\cT^\cA_\ell(\G^F)| \simeq |\cT_e(\G,F)|.\]
First,  consider the functor
\[f:\CT_\ell^{\mathcal{A}}(\G^F)^\op\to \cT_e(\G,F)\]
given by sending
\begin{enumerate}
\item an object $A$ of $\cT_\ell^\cA(\bG^F)$ to $f(A):= \c_\G(\z^\circ(\c_\G^\circ(A))_{\Phi_e})$;
\item a morphism $\alpha_x^\op:A\to A'$ in $\CT_\ell^\mathcal{A}(\G^F)^\op$ to $f(\alpha_x^\op):=\beta_{x^{-1}}:f(A)\to f(A').$
\end{enumerate}
We claim that $f$ is well-defined (as a functor). Indeed, by Lemma \ref{lem:e-split Levi and good primes} (ii), $f(A)$ is an object of $\cT_e(\G,F)$ for every object $A$ of $\CT_\ell^\mathcal{A}(\G^F)^{\rm op}$. Suppose that $A,A',A''$ are objects in $\CT_\ell^\mathcal{A}(\G^F)^\op$ and that $\alpha_x^\op:A\to A'$ and $\alpha_y^\op:A'\to A''$ ($x,y\in \G^F$) are morphisms between them. Then we have that $A'^x\leq A$ so $f(A)^{x^{-1}}\leq f(A')$ and $\beta_{x^{-1}}:f(A)\to f(A')$  lies in $\cT_e(\G,F)$. Moreover, $\alpha_y^\op\circ\alpha_x^\op=\alpha_{yx}^\op$ and $\beta_{x^{-1}y^{-1}}=\beta_{y^{-1}}\circ\beta_{x^{-1}}$, so $f(\alpha_y^\op\circ\alpha_x^\op)=f(\alpha_y^\op)\circ f(\alpha_x^\op)$, as needed.

We next show, using Theorem \ref{thm:Quillen theorem A}, that $f$ induces a homotopy equivalence
\begin{equation}
\label{e:fhtpyeq}
\left|\CT_\ell^{\mathcal{A}}\left(\G^F\right)^\op\right|\simeq \left|\cT_e(\G,F)\right|.
\end{equation}
To this end, by using Lemma \ref{lem:Filtered categories are contractible} below, it suffices to show that the comma category  $f/\L$ is filtered  for each $e$-split Levi subgroup $\L$ of $(\G,F)$. Set $Z:=\z^\circ(\L)^F_\ell$. Then it follows from Lemma \ref{lem:e-split Levi and good primes} (i), that $f(Z)=\L$ and we thus obtain an object $\beta_1:f(Z)\to \L$ in $f/\L$ with the the property that, for any objects  $\beta_x:f(A)\to \L$ and $\beta_y:f(A')\to \L$ in  $f/\L$, there are morphisms from $\beta_x:f(A)\to \L$ and $\beta_y:f(A')\to \L$ to $\beta_1:f(Z)\to \L$ given respectively by the commutative diagrams
\begin{center}
\begin{tikzcd}
f(A)\arrow[rd, "\beta_x"]\arrow[dd, swap, "f(\alpha_{x^{-1}})=\beta_x"] &&& f(A')\arrow[rd, "\beta_y"]\arrow[dd, swap, "f(\alpha_{y^{-1}})=\beta_y"]
\\
&\L &&& \L.
\\
f(Z)\arrow[ru, swap, "\beta_1"] &&& f(Z)\arrow[ru, swap, "\beta_1"]

\end{tikzcd}
\end{center}
It remains to check that Definition \ref{def:filtered}(ii) holds. Consider two parallel morphisms in $f/\L$ between the objects $\beta_x:f(A)\to \L$ and $\beta_y:f(A')\to \L$ given by the following commutative diagrams
\begin{center}
\begin{tikzcd}
f(A)\arrow[rd, "\beta_x"]\arrow[dd, swap, "f(\alpha_g)=\beta_{g^{-1}}"] &&& f(A)\arrow[rd, "\beta_x"]\arrow[dd, swap, "f(\alpha_{h})=\beta_{h^{-1}}"]
\\
&\L &&& \L
\\
f(A')\arrow[ru, swap, "\beta_y"] &&& f(A')\arrow[ru, swap, "\beta_y"]
\end{tikzcd}
\end{center}
The commutativity of the left-hand diagram implies that $x=g^{-1}y$ while the commutativity of the right-hand diagram implies that $x=h^{-1}y$. Considering the object $\beta_1:f(Z)\to \L$ of $f/\L$ and the morphism in $f/\L$ from the object $\beta_y:f(A')\to \L$ to $\beta_1:f(Z)\to \L$ given by the diagram
\begin{center}
\begin{tikzcd}
f(A')\arrow[rd, "\beta_y"]\arrow[dd, swap, "f(\alpha_{y^{-1}})=\beta_y"]
\\
&\L
\\
f(Z)\arrow[ru, swap, "\beta_1"]
\end{tikzcd}
\end{center}
we see that the compositions
\begin{center}
\begin{tikzcd}
f(A)\arrow[rd, "\beta_x"]\arrow[d, swap, "f(\alpha_g)=\beta_{g^{-1}}"] &&& f(A)\arrow[rd, "\beta_x"]\arrow[d, swap, "f(\alpha_{h})=\beta_{h^{-1}}"]
\\
f(A')\arrow[r, swap, "\beta_y"]\arrow[d, swap, "f(\alpha_{y^{-1}})=\beta_y"] &\L && f(A')\arrow[r, swap, "\beta_y"]\arrow[d, swap, "f(\alpha_{y^{-1}})=\beta_y"] &\L
\\
f(Z)\arrow[ru, swap, "\beta_1"] &&& f(Z)\arrow[ru, swap, "\beta_1"]
\end{tikzcd}
\end{center}
coincide since $\beta_y\circ\beta_{g^{-1}}=\beta_x=\beta_y\circ\beta_{h^{-1}}$. We conclude that \eqref{e:fhtpyeq} holds.

Finally,  note that by \cite[Exercise 11.2.9]{Ric20} the classifying space of the opposite category $\CT_\ell^{\mathcal{A}}(\G^F)^\op$ is homeomorphic to that of $\CT_\ell^{\mathcal{A}}(\G^F)$, and the result follows.
\end{proof}

We next prove Theorem \ref{thm:Main contractibility}. For terminology and results concerning orbit spaces of simplicial complexes and categories we shall need, the reader is referred to Sections \ref{sec:Orbit space, simplicial complex} and \ref{sec:Orbit space, categories} respectively. We require the following Lemma which relates the orbit space of the category $\cT_e(\GG)$ to the orbit space of the simplicial complex $\Delta(L_e(\mathbb{G}))$ defined in Section \ref{sec:Conjecture}. The proof is inspired by that of \cite[Proposition 3.4]{Lin05}.

\begin{lem}
\label{lem:Isomorphism of posets}
The orbit spaces $\mathcal{O}(\cT_e(\mathbb{G}))$ and $\mathcal{O}_W(\Delta(L_{e}(\mathbb{G})))$ are homotopy equivalent.
\end{lem}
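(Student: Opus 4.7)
The plan is to adapt the argument of Linckelmann's \cite[Proposition 3.4]{Lin05}, which compares the orbit space of the transporter category of $\ell$-subgroups of a finite group with the $G$-orbit space of the associated simplicial complex of $\ell$-subgroups, by replacing $\ell$-subgroups with $e$-split Levi subcosets of $\GG$ and $G$-conjugation with $W$-conjugation. I would construct mutually inverse maps $\bar\pi : \cO(\cT_e(\GG)) \to \cO_W(\Delta(L_e(\GG)))$ and $\iota$ going the other way, induced by natural simplicial maps between the nerve of $\cT_e(\GG)$ and the order complex $\Delta(L_e(\GG))$.

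To define $\pi$ on simplices, I would take an $n$-simplex $\sigma = (\LL_0 \xrightarrow{w_1} \LL_1 \xrightarrow{w_2} \cdots \xrightarrow{w_n} \LL_n)$ of the nerve of $\cT_e(\GG)$ and set $v_i := w_n w_{n-1} \cdots w_{i+1}$ for $0 \le i < n$ and $v_n := 1$. Since $v_{i-1} = v_i w_i$, the relation ${}^{w_i}\LL_{i-1} \le \LL_i$ conjugates to ${}^{v_{i-1}}\LL_{i-1} \le {}^{v_i}\LL_i$, so $\pi(\sigma) := ({}^{v_0}\LL_0 \le {}^{v_1}\LL_1 \le \cdots \le \LL_n)$ is a genuine ascending chain in $L_e(\GG)$. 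Composing with the quotient map $\Delta(L_e(\GG)) \to \cO_W(\Delta(L_e(\GG)))$ yields a simplicial map which I would check descends to a well-defined $\bar\pi$ on $\cO(\cT_e(\GG))$ using the precise definition of the orbit space of a category recalled in the Appendix. In the opposite direction, $\iota$ sends the $W$-orbit of a chain $(\LL_0 \le \cdots \le \LL_n)$ to the orbit of the identity-morphism simplex $(\LL_0 \xrightarrow{1} \cdots \xrightarrow{1} \LL_n)$, which is permissible because $1 \in \n_W(\LL_{i-1}, \LL_i)$ whenever $\LL_{i-1} \le \LL_i$, and $W$-conjugate chains clearly lift to the same orbit.

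The composition $\bar\pi \circ \iota$ is the identity by direct inspection. For the other composition, the simplex $\iota(\bar\pi(\sigma)) = ({}^{v_0}\LL_0 \xrightarrow{1} \cdots \xrightarrow{1} \LL_n)$ differs from $\sigma$ only by the morphism-level relabelling encoded by the tuple $(v_0, v_1, \ldots, v_{n-1}, 1)$, which ought to lie in the equivalence relation defining $\cO(\cT_e(\GG))$. I expect the principal obstacle to sit precisely here: verifying that the intrinsic equivalence relation underlying the orbit space of a small category, as formulated in the Appendix, matches the explicit $W$-conjugation relation just sketched, together with the routine compatibility with face and degeneracy maps. This bookkeeping is the technical heart of the argument and runs parallel to the approach of \cite[Proposition 3.4]{Lin05} in the fusion-theoretic setting.
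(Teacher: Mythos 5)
There is a genuine gap in your plan, and it lies in the definition of $\cO(\cT_e(\GG))$. By Definition~\ref{def:orbspace}, the orbit space of an $EI$-category $\cC$ is $|\Delta([S(\cC)])|$: the realisation of the order complex of the poset $[S(\cC)]$ of isomorphism classes of objects of the subdivision category $S(\cC)$, i.e.\ isomorphism classes of chains of non-isomorphisms in $\cC$. In particular, the \emph{vertices} of $\cO(\cT_e(\GG))$ are classes of entire chains of $e$-split Levi subcosets, and its $k$-simplices are ascending chains of such classes in the poset $[S(\cT_e(\GG))]$. This space is not a quotient of the nerve of $\cT_e(\GG)$, whose $n$-simplices are strings of $n$ composable morphisms in $\cT_e(\GG)$. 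Your maps $\pi$ and $\iota$ are defined on nerve simplices and you then claim they ``descend to'' $\cO(\cT_e(\GG))$, but there is no such descent: the step you flag as the ``technical heart'' (matching the intrinsic equivalence relation of the orbit space against a $W$-conjugation relation on nerve simplices) is not a bookkeeping problem, it is where the approach breaks, because the two simplicial sets are structurally different.

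The straightening idea you use is correct, and it is in fact the core of the paper's argument, but it must be carried out one level up, at the level of the poset of chains. The paper constructs a map $\Theta : S(\Delta(L_e(\GG)))/W \to [S(\cT_e(\GG))]$ and shows it is an isomorphism of posets; surjectivity is exactly your straightening (replacing a chain $\mathbb{K}_0 \to \cdots \to \mathbb{K}_n$ of non-isomorphisms by an isomorphic chain of honest inclusions), and injectivity comes from the fact that an isomorphism in $S(\cT_e(\GG))$ between two inclusion chains is given by a single $w \in W$. Once you have the poset isomorphism you can take order complexes and realise, giving $\cO(\cT_e(\GG)) \cong |\Delta\bigl(S(\Delta(L_e(\GG)))/W\bigr)|$. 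But you are then still not done: $\cO_W(\Delta(L_e(\GG)))$ is by definition $\sd(\Delta(L_e(\GG)))/W = |\Delta(S(\Delta(L_e(\GG))))|/W$, the quotient \emph{of the space}, which is not literally the order complex of the quotient poset. Bridging these requires \cite[Proposition 3.14]{Bab-Koz05}, and nothing in your plan substitutes for that input.
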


\begin{proof}
We first prove that $[S(\cT_e(\mathbb{G}))]$ and $S(\Delta(L_e(\mathbb{G})))/W$ are isomorphic as posets. We follow the argument given in the proof of \cite[Proposition 3.4]{Lin05}. First, note that any element in the poset $S(\Delta(L_e(\mathbb{G})))$ is a chain $\mathbb{L}_0<\mathbb{L}_1<\dots <\mathbb{L}_n$ of $e$-split Levi reflection subcosets of $\mathbb{G}$ and hence an object in $S(\cT_e(\mathbb{G}))$. Since $W$-conjugate chains in $S(\Delta(L_e(\mathbb{G})))$ are isomorphic in $S(\cT_e(\mathbb{G}))$, we thus obtain a well-defined natural map
\[\Theta: S(\Delta(L_e(\mathbb{G})))/W\to [S(\cT_e(\mathbb{G}))]\]
of posets. We first prove that $\Theta$ is surjective. If
\[\begin{tikzcd}\sigma:\mathbb{K}_0\arrow[r, "\varphi_0"] &\mathbb{K}_1\arrow[r, "\varphi_1"]&\dots\arrow[r, "\varphi_{n-1}"] &\mathbb{K}_n \end{tikzcd}\]
is a chain of non-isomorphisms in $S(\cT_e(\mathbb{G}))$ then $\sigma$ is isomorphic in $S(\cT_e(\mathbb{G}))$ to the chain
\[\tau:\mathbb{L}_0<\mathbb{L}_1<\dots<\mathbb{L}_n \mbox{ where }\mathbb{L}_n=\mathbb{K}_n \mbox{ and } \mathbb{L}_i=\varphi_{n-1}\circ\dots\circ \varphi_i(\mathbb{K}_i)\]
via the isomorphism $\mu$ given by the family of morphisms $\mu_n:={\rm Id}_{\mathbb{K}_n}$ and $\mu_i:=\varphi_{n-1}\circ\dots\circ \varphi_i$ for each $i < n$. It follows that $[\sigma]=[\tau]$ lies in the image of $\Theta$. 

Moreover $\Theta$ is injective since if $\mathbb{L}_0<\mathbb{L}_1<\dots<\mathbb{L}_n$ and $\mathbb{K}_0<\mathbb{K}_1<\dots<\mathbb{K}_n$ are chains of $e$-split reflection subcosets of $\mathbb{G}$ which are isomorphic as objects in $S(\cT_e(\mathbb{G}))$ then there is an element $w\in W$ such that $\mathbb{K}_i={^w\mathbb{L}}_i$ for every $i=0,\dots, n$. 

We have thus shown that the posets $(\Delta(L_e(\mathbb{G})))/W$ and $[S(\cT_e(\mathbb{G}))]$ are isomorphic. Now, by taking geometric realisations we get
\begin{equation}
\label{eq:Isomorphism of posets, 1}
\mathcal{O}(\cT_e(\mathbb{G})):=|\Delta([S(\cT_e(\mathbb{G}))])|\simeq |\Delta(S(\Delta(L_e(\mathbb{G})))/W)|
\end{equation}
while by \cite[Proposition 3.14]{Bab-Koz05} we know that
\begin{equation}
\label{eq:Isomorphism of posets, 2}
|\Delta(S(\Delta(L_e(\mathbb{G})))/W)|\simeq |\Delta(S(\Delta(L_e(\mathbb{G}))))|/W =:\mathcal{O}_W(\Delta(L_e(\mathbb{G}))).
\end{equation}
The result now follows by combining \eqref{eq:Isomorphism of posets, 1} and \eqref{eq:Isomorphism of posets, 2}.
\end{proof}

\begin{proof}[Proof of Theorem \ref{thm:Main contractibility}]
By Lemma \ref{lem:Isomorphism of posets}, to prove that $\mathcal{O}\left(\cT_e(\mathbb{G})\right)$ is contractible, it suffices to prove that $\mathcal{O}_W(\Delta(L_e(\mathbb{G})))$ is contractible. To this end, let $\Gamma$ be the directed graph with vertex set $L_e(\mathbb{G})$ and an edge $\mathbb{L}\to\mathbb{K}$ if $\mathbb{L} \le \mathbb{K}$ in $L_e(\mathbb{G})$. Observe that $\Gamma$ is a directed $W$-graph, that is, a directed graph on which $W$ acts via automorphisms of directed graphs. Then, by definition, the flag complex $X(\Gamma)$ (see Definition \ref{d:flag}) coincides with the order complex $\Delta(L_e(\mathbb{G}))$. Since $\Gamma$ is well-founded (see Definition \ref{d:morse}(iii)), it suffices to show that the hypotheses of Lemma \ref{lem:Morse theory} hold for the $W$-graph $\Gamma$.  First, observe that the minimal elements of $\Gamma$ are exactly the centralisers of Sylow $\Phi_e$-tori in $\GG$ and are hence all $W$-conjugate by Lemma \ref{l:splitpres}. Thus Lemma \ref{lem:Morse theory}(i) holds. Secondly, let $\sigma$ be a non-minimal simplex  in $X(\Gamma)$ and $\mathbb{L} \in \sigma$ be its minimal element. Then,  $\Gamma_{\downarrow}^\sigma=\Gamma_{\downarrow}^{\mathbb{L}}$ (see \cite[Remark 11]{Bux99}) and the set of vertices of $\Gamma_{\downarrow}^{\mathbb{L}}$ coincides with $L_e(\mathbb{L})$. Write $\mathbb{L}=(W_{\mathbb{L}}\phi',V)$. By applying Lemma \ref{l:splitpres} to the reflection subcoset $\mathbb{L}$, we see that $W_\mathbb{L}$ acts transitively on the set of minimal elements of $\Gamma_{\downarrow}^{\mathbb{L}}$. Since $W_\mathbb{L} \leq W_\sigma$, this shows that $W_\sigma$ acts transitively on the minimal elements of $\Gamma_{\downarrow}^\mathbb{\sigma}$ and Lemma \ref{lem:Morse theory}(ii) holds. The result follows.
\end{proof}

\section{An alternating sum for $\mathbb{Z}_\ell$-spetses}
\label{sec:Dade for reflection groups}

To start, we recall the statement of Dade's Conjecture in its block-free form (see \cite[Conjecture 9.25]{Nav18} or, more generally, \cite[Conjecture 2.5.4]{Ros-Thesis}). Let $G$ be a finite group and $\chi\in\irr(G)$. Recall that the $\ell$-defect of $\chi$ is defined as $d_\ell(\chi):=\nu_\ell(|G|)-\nu_\ell(\chi(1))$, where $\nu_\ell$ denotes the $\ell$-adic valuation. For a non-negative integer $d$, let $\irr^d(G)$ be the set of irreducible characters $\chi\in\irr(G)$ with $d_\ell(\chi)=d$. Consider the orbit space $\mathcal{O}_G(\mathcal{S}_\ell(G))$ of chains of $\ell$-subgroups $\sigma=\{P_0=1<P_1<\dots<P_n\}$ and for each $\sigma\in \mathcal{O}_G(\mathcal{S}_\ell(G))$, denote by $|\sigma|:=n$ its length and by $G_\sigma$ the stabiliser of $\sigma$ in $G$. Furthermore, let $\k^d(G_\sigma)$ be the number of characters in $\irr^d(G_\sigma)$.

\begin{conj}[Dade's Conjecture]
\label{conj:dade}
For any finite group $G$ and any positive integer $d$, we have
\[\sum\limits_{\sigma\in\mathcal{O}_G(\mathcal{S}_\ell(G))}(-1)^{|\sigma|}\k^d(G_\sigma)=0.\]
\end{conj}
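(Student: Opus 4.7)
The statement is Dade's Conjecture in its block-free form, a major open problem in modular representation theory; any concrete plan is therefore necessarily speculative and large scale. The strategy most widely pursued, developed systematically by Späth and collaborators, proceeds by reducing to finite non-abelian simple groups. My first step would be to invoke (or establish) a reduction theorem asserting that if every finite non-abelian simple group $S$ satisfies a suitable \emph{inductive Dade condition}, then Conjecture \ref{conj:dade} holds for every finite group $G$. This reduction replaces the global alternating-sum identity by an equivariant character-triple statement for each quasi-simple cover of $S$, simultaneously tracking the outer automorphism action and the behaviour of central characters along each chain of $\ell$-subgroups.

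With the reduction in place, the next step is to verify the inductive condition class by class. Sporadic groups can be dispatched by direct computation, alternating groups by combinatorial arguments on Young-type stabilisers, and groups of Lie type in defining characteristic by arguments specific to that setting (where the Sylow $\ell$-subgroups are unipotent radicals of parabolic subgroups and the associated complex collapses onto the Tits building). The substantive case is $S=\G^F/\z(\G^F)$ with $\ell\nmid q$. For these groups I would invoke the first author's reformulation of Dade's Conjecture in terms of chains of $e$-split Levi subgroups (\cite[Conjecture C]{Ros24}), which rests on the $\G^F$-equivariant homotopy equivalence \eqref{e:rossi}, and upgrade it using the refined transporter-category picture of Theorem \ref{thm:Main, homotopy equivalence}, so that the alternating sum is reindexed by $\G^F$-orbits of chains of $e$-split Levi subgroups rather than of $\ell$-subgroups.

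Within this $e$-local framework, $\irr^d(G_\sigma)$ would be decomposed along rational Lusztig series and analysed using Jordan decomposition of characters together with $\Phi_e$-Harish-Chandra theory, matching characters between adjacent terms of each chain. The unipotent part of this matching is already the content of \cite[Theorem B]{Ros-Unip}; the remaining series would be handled by passing to centralisers of semisimple $\ell'$-elements, which are themselves connected reductive groups to which the $e$-Levi machinery applies inductively. Delicate exclusions, such as that of $\ell=3$ for components of type $^3\mathbf{D}_4$ appearing in Definition \ref{def:Very good primes}, would need to be treated separately.

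The principal obstacle is not proving the numerical equality but verifying the full \emph{equivariant} inductive condition: one must construct a character-level bijection simultaneously compatible with outer automorphisms, with central characters, and with the chain structure at every level. Controlling diagonal and graph automorphisms on $e$-Harish-Chandra series, particularly in exceptional types and in large classical rank, is only partially understood in the literature, and this is precisely the gap that prevents a complete proof of Conjecture \ref{conj:dade} at present.
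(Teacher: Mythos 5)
The statement you were given is labelled in the paper as a \emph{conjecture} (Dade's Conjecture in block-free form), and the paper supplies no proof of it --- it is recalled only as motivation for the authors' Dade-like alternating sum for $\mathbb{Z}_\ell$-spetses (Theorem \ref{thm:Dade-like}). You correctly recognised that this is a major open problem, so there is no proof in the paper against which to measure your attempt, and your proposal is appropriately framed as a research programme rather than a proof.

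As a programme, what you describe --- reduction to an inductive condition on finite simple groups in the style of Sp\"ath, verification case by case, and for groups of Lie type in non-defining characteristic the reindexing of the alternating sum over chains of $e$-split Levi subgroups via \eqref{e:rossi} and \cite[Conjecture C]{Ros24}, with Jordan decomposition and $\Phi_e$-Harish-Chandra theory handling the individual Lusztig series --- is a fair and accurate summary of the current state of the art, and you correctly locate the genuine obstruction in the equivariance requirements of the inductive condition (compatibility with outer automorphisms and central characters along chains). One small caveat: Theorem \ref{thm:Main, homotopy equivalence} of this paper is a statement about $\ell$-completed classifying spaces and does not by itself ``upgrade'' the $\G^F$-equivariant poset equivalence \eqref{e:rossi} that the character-counting reformulation actually rests on; the reindexing of Dade's sum already follows from \cite[Theorem E]{Ros-Homotopy} and does not need the transporter-category result. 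None of this constitutes a proof, and the conjecture remains open; your proposal should not be presented as resolving the statement.
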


When $G=\G^F$ is a finite reductive group defined over $\mathbb{F}_q$ and $\ell$ does not divide $q$, work of the first named author shows, under suitable restrictions, that Dade's Conjecture can be reformulated in terms of $e$-Harish-Chandra theory with $e$ the order of $q$ modulo $\ell$ (see \cite[Conjecture C]{Ros24}). For this, one has to replace the set of $\ell$-chains appearing in Dade's Conjecture with chains of $e$-split Levi subgroups (see \cite[Theorem E]{Ros-Homotopy} and \cite[Proposition 7.10]{Ros24}). Moreover, the restriction $d>0$ in Conjecture \ref{conj:dade}, may be removed by introducing a corrective term involving $e$-cuspidal characters to the alternative sum (see \cite[Proposition 4.18]{Ros24} and \cite[Lemma 4.8]{Ros-Homotopy}).  Following a question raised by  Michel Brou\'e \cite{Broue22}, the first named author in \cite{Ros-Unip} considered a restricted form of this alternating sum involving just the unipotent characters. More precisely, under the hypothesis of \cite[Theorem B]{Ros-Unip}, we have
\begin{equation}
\label{eq:Dade for unipotent, group case}
\k^d_{\rm u}(\G^F)-\k^d_{\rm u, c}(\G^F)=\sum\limits_{\sigma\in\mathcal{O}_{\G^F}(\CL_e^\star(\G,F))}(-1)^{|\sigma|}\k^d_{\rm u}(\G^F_\sigma)
\end{equation}
for every non-negative integer $d$ and where
\begin{enumerate}
\item $\k^d_{\rm u}(\G^F)$ is the number of unipotent characters of $\ell$-defect $d$ of $\G^F$;
\item $\k^d_{\rm u, c}(\G^F)$ is the number of $e$-cuspidal unipotent characters of $\ell$-defect $d$ of $\G^F$;
\item $\mathcal{O}_{\G^F}(\CL_e^\star(\G,F))$ is the set of conjugacy classes of $e$-chains $\sigma\neq \{\G\}$; and
\item $\k^d_{\rm u}(\G_\sigma^F)$ is the number of irreducible characters of $\ell$-defect $d$ in $\G_\sigma^F$ that are unipotent in the sense of \cite[(5.10)]{Ros-Unip}, for every $\sigma\in\mathcal{O}_{\G^F}(\CL_e^\star(\G,F))$.
\end{enumerate}

The aim of this section is to prove an analogue of \eqref{eq:Dade for unipotent, group case} for $\mathbb{Z}_\ell$-spetses. We recall the following definition from \cite[Definition 6.1]{Kes-Mal-Sem}.

\begin{defin}
\label{def:spets}
A \textit{$\ZZ_\ell$-spets} is a $\ZZ_\ell$-reflection coset $(W\phi,V)$ such that the extension of $(W,V)$ to $\mathbb{C}$ is spetsial as defined in \cite[Section 8]{Mal00}.
\end{defin}

As in \cite{Bro-Mal-Mic14} there exists a set $\uch(\mathbb{G})$ of \textit{unipotent characters} together with a degree polynomial $\Deg(\rho)$ for every $\rho\in \uch(\mathbb{G})$. Moreover, by \cite[4.31 (1)]{Bro-Mal-Mic14}, there is a partition
\begin{equation}
\label{eq:Partition e-HC}
\uch(\mathbb{G})=\coprod\limits_{(\mathbb{L},\lambda)}\uch_\mathbb{G}(\mathbb{L},\lambda)
\end{equation}
where $(\mathbb{L},\lambda)$ runs over a set of representatives for the action of $W$ on the \textit{$\Phi$-cuspidal pairs} of $\mathbb{G}$ (see \cite[Definition 4.29]{Bro-Mal-Mic14}). 

\begin{defin}
\label{def:Defect in Spetses}
Let $q$ be a prime power and $\ell$ a prime number not dividing $q$. For every $\chi\in\uch(\mathbb{G})$ we define the \textit{$(q,\ell)$-defect} of $\chi\in\uch(\mathbb{G})$ as
\[d_{q,\ell}(\chi):=\nu_\ell\left(|\mathbb{G}|_{x=q}\right)-\nu_\ell\left(\Deg(\chi)_{x=q}\right).\]
We then set
\[\k^d_{\rm u}(\mathbb{G}(q)):=\left|\left\lbrace \chi\in\uch(\mathbb{G}) \enspace\middle|\enspace d_{q,\ell}(\chi)=d\right\rbrace\right|\]
If $\Phi=x-\zeta\in\mathbb{Z}_\ell[x]$, where $\zeta\in\mathbb{Z}_\ell^\times$ is an $e$-th root of unity and $e$ is the order of $q$ modulo $\ell$ (modulo $4$ if $\ell=2$), we set
\[\k^d_{\rm u}(\mathbb{G}(q),(\mathbb{L},\lambda)):=\left|\left\lbrace \chi\in\uch_\mathbb{G}(\mathbb{L},\lambda) \enspace\middle|\enspace d_{q,\ell}(\chi)=d\right\rbrace\right|\]
for every $\Phi$-cuspidal pair $(\mathbb{L},\lambda)$ of $\mathbb{G}$. Finally, motivated by \eqref{eq:Dade for unipotent, group case}, we define $\k^d_{\rm u,c}(\mathbb{G}(q))$ to be the number of characters $\chi\in\uch(\mathbb{G})$ such that $d_{q,\ell}(\chi)=d$ and $(\mathbb{G},\chi)$ is a $\Phi$-cuspidal pair.
\end{defin}

The invariants defined above can be controlled in terms of relative Weyl groups. For this, we need a generalisation of \cite[Proposition 6.5]{Kes-Mal-Sem} to non-principal series. Recall below that the relative Weyl group $W_\mathbb{G}(\LL,\lambda)$ is a finite group and we can therefore consider the set $\irr^d(W_\mathbb{G}(\mathbb{L},\lambda))$ of its irreducible character of fixed $\ell$-defect $d\geq 0$.

\begin{prop}
\label{prop:e-HC bijection}
Let $\mathbb{G}=(W\phi,V)$ be a $\mathbb{Z}_\ell$-spets, $q$ a prime power not divisible by $\ell$ and $e$ the order of $q$ modulo $\ell$ (modulo $4$ if $\ell=2$). Let $\zeta\in\mathbb{Z}_\ell^\times$ be an $e$-th root of unity and set $\Phi=x-\zeta\in\mathbb{Z}_\ell[x]$. Then, for every $\Phi$-cuspidal pair $(\mathbb{L},\lambda)$ of $\mathbb{G}$ there exists a bijection
\[\Psi_{(\mathbb{L},\lambda)}^\mathbb{G}:\uch_\mathbb{G}(\mathbb{L},\lambda) \to \irr\left(W_\mathbb{G}(\mathbb{L},\lambda)\right)\]
such that $d_{q,\ell}(\chi)-d_{q,\ell}(\lambda)=d_\ell\left(\Psi_{(\mathbb{L},\lambda)}^\mathbb{G}(\chi)\right)$
for every $\chi\in\uch_\mathbb{G}(\mathbb{L},\lambda)$. Consequently, for every non-negative integer $d$, we have
\[\k^d_{\rm u}(\mathbb{G}(q),(\mathbb{L},\lambda))=\left|\irr^{d-d_{q,\ell}(\lambda)}(W_\mathbb{G}(\mathbb{L},\lambda))\right|.\]
\end{prop}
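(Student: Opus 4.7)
The plan is to extend \cite[Proposition 6.5]{Kes-Mal-Sem} (which handles the principal series, i.e.\ the case where $\mathbb{L}$ is the Sylow $\Phi$-torus and $\lambda=1$) to an arbitrary $\Phi$-cuspidal pair $(\mathbb{L},\lambda)$. First I would invoke the bijection $\Psi^\mathbb{G}_{(\mathbb{L},\lambda)}$ itself, whose existence is guaranteed by the $\Phi$-Harish--Chandra theory of spetses developed in \cite{Bro-Mal-Mic14} (in particular by the partition \eqref{eq:Partition e-HC} together with the parametrisation of each series by $\irr(W_\mathbb{G}(\mathbb{L},\lambda))$). This bijection arises via a generic cyclotomic Hecke algebra $\mathcal{H}=\mathcal{H}(W_\mathbb{G}(\mathbb{L},\lambda),\bar{u})$ whose parameters are monomials in $x$ determined by $(\mathbb{L},\lambda)$, multiplied by roots of unity lying in $\mathbb{Z}_\ell^\times$.

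The quantitative content of the proposition lies in the degree formula. I would next apply the standard expression
\[\Deg(\chi)=\varepsilon\cdot\frac{|\mathbb{G}|}{|\mathbb{L}|}\cdot\frac{\Deg(\lambda)}{S_\psi(x)},\]
valid for $\chi\in\uch_\mathbb{G}(\mathbb{L},\lambda)$ and $\psi=\Psi^\mathbb{G}_{(\mathbb{L},\lambda)}(\chi)$, where $\varepsilon\in\{\pm 1\}$ and $S_\psi$ is the Schur element of $\psi$ in $\mathcal{H}$. Specialising at $x=q$ and comparing with Definition \ref{def:Defect in Spetses}, a short computation yields
\[d_{q,\ell}(\chi)-d_{q,\ell}(\lambda)=\nu_\ell\bigl(S_\psi(q)\bigr).\]

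The main obstacle will be to prove the identity $\nu_\ell(S_\psi(q))=d_\ell(\psi)$, and it is here that the congruence hypothesis on $e$ (modulo $\ell$, respectively modulo $4$ when $\ell=2$) enters. Following the principal-series argument of \cite[Proposition 6.5]{Kes-Mal-Sem}, one would specialise $\mathcal{H}$ via $x\mapsto q$ and observe that the resulting specialisation is, after a suitable extension of $\mathbb{Q}_\ell$, isomorphic to the group algebra of $W_\mathbb{G}(\mathbb{L},\lambda)$, with $S_\psi(q)$ mapping to $|W_\mathbb{G}(\mathbb{L},\lambda)|/\psi(1)$. Since the Hecke parameters are products of a power of $q$ with an $\ell'$-root of unity and $\ell\nmid q$, the specialisation preserves $\ell$-adic valuations. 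The case $\ell=2$ requires slightly more care, which is reflected in the hypothesis on $e$ modulo $4$, but the argument is otherwise identical to \emph{loc.\ cit.}

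Finally, the ``Consequently'' clause is an immediate counting exercise: the defect-preserving bijection $\Psi^\mathbb{G}_{(\mathbb{L},\lambda)}$ yields
\[\k^d_{\rm u}(\mathbb{G}(q),(\mathbb{L},\lambda))=\bigl|\{\psi\in\irr(W_\mathbb{G}(\mathbb{L},\lambda))\,:\,d_\ell(\psi)=d-d_{q,\ell}(\lambda)\}\bigr|=\bigl|\irr^{d-d_{q,\ell}(\lambda)}(W_\mathbb{G}(\mathbb{L},\lambda))\bigr|.\]
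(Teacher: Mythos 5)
Your proposal takes essentially the same route as the paper's proof, and it is correct. Both proofs rest on two facts: (i) the $\Phi$-Harish--Chandra theory of \cite{Bro-Mal-Mic14} (Axiom 4.31) provides the bijection $\Psi^\GG_{(\LL,\lambda)}$ together with a degree formula $\Deg(\chi)=\pm\frac{|\GG|\,\Deg(\lambda)}{|\LL|\,S_\psi}$ involving the Schur element $S_\psi$ of the associated cyclotomic Hecke algebra; and (ii) the $\ell$-adic valuation identity $\nu_\ell(S_\psi(q))=\nu_\ell\bigl(|W_\GG(\LL,\lambda)|/\psi(1)\bigr)=d_\ell(\psi)$, which is the content of \cite[Proposition 6.5]{Kes-Mal-Sem}'s specialisation argument and is where the congruence hypothesis on $e$ is used. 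Given these, the defect formula and the final counting statement follow by exactly the arithmetic you describe. The paper's write-up is more compressed: it cites \cite[Axiom 4.31(2(b))]{Bro-Mal-Mic14} directly for the $\ell$-valuation identity with $\Psi(\chi)(1)$ already in place, effectively folding step (ii) into the citation, whereas you separate it out and flag where the hypothesis on $e$ (and the special care at $\ell=2$) is really needed. Your version is arguably the clearer exposition of the same argument; there is no substantive difference in strategy.
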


\begin{proof}
Observe that \cite[Axiom 4.31]{Bro-Mal-Mic14} yields a bijection
\[\Psi_{(\LL,\lambda)}^\GG:\Uch_\GG(\LL,\lambda) \rightarrow \Irr(W_\GG(\LL,\lambda)\]
as above. Furthermore, by \cite[Axiom 4.31(2(b))]{Bro-Mal-Mic14}, for each $\chi \in \Uch_\GG(\LL,\lambda)$ we have
\[\nu_\ell(\Deg(\chi)_{x=q})=\nu_\ell\left(\frac{|\GG|_{x=q} \cdot \Deg(\lambda)_{x=q} \cdot \Psi(\chi)(1)}{|W_\GG(\LL,\lambda)|\cdot |\LL|_{x=q}}\right)\]
from which we get 
\begin{align*}
d_{q,\ell}(\chi)-d_{q,\ell}(\lambda)&=\nu_\ell(|\GG|_{x=q})-\nu_\ell(\Deg(\chi)_{x=q})-\nu_\ell(|\LL|_{x=q})+\nu_\ell(\Deg(\lambda)_{x=q})
\\
&=\nu_\ell(|W_\GG(\LL,\lambda)|)-\nu_\ell(\Psi^\GG_{(\LL,\lambda)}(\chi)(1))
\\
&=d_\ell\left(\Psi_{(\mathbb{L},\lambda)}^\mathbb{G}(\chi)\right)
\end{align*}
as needed.
\end{proof}

Our next aim is to define an analogue of the invariants $\k^d_{\rm u}(\G^F_\sigma)$ from \eqref{eq:Dade for unipotent, group case} for $\mathbb{Z}_\ell$-spetses. For the rest of this section we consider $\mathbb{G}=(W\phi,V)$, $q$, $\ell$, $e$, $\zeta$, and $\Phi$ as in the statement above. Let $\sigma$ be a chain of $e$-split Levi subcosets in $\mathbb{G}$. Since we will only consider such chains up to $W$-conjugation, we can think of $\sigma$ as an object in the orbit category $\mathcal{O}(\cT_e(\mathbb{G}))$ (see the proof of Lemma \ref{lem:Isomorphism of posets}).

\begin{defin}
\label{def:Relative Weyl group, chains}
Let $\mathbb{L}(\sigma)$ be the minimal term in $\sigma$ and consider a $\Phi$-cuspidal pair $(\mathbb{L},\lambda)$ of $\mathbb{L}(\sigma)$. Let $\n_W(\sigma)$ be the intersection of the normalisers $\n_W(\mathbb{K})$ for $\mathbb{K}$ running over the terms of $\sigma$ and set $W_\mathbb{G}(\sigma,\mathbb{L}):=(\n_W(\mathbb{L})\cap \n_W(\sigma))/W_\mathbb{L}$. Since $W_\mathbb{G}(\sigma,\mathbb{L})\leq W_\mathbb{G}(\mathbb{L})$, it follows from \cite[4.3-4.4]{Bro-Mal-Mic14} that $W_\mathbb{G}(\sigma,\mathbb{L})$ acts on the set $\uch(\mathbb{L})$. We denote by $W_\mathbb{G}(\sigma,(\mathbb{L},\lambda))$ the stabiliser of $\lambda$ in $W_\mathbb{G}(\sigma,\mathbb{L})$.
\end{defin}

Now, inspired by \cite[Definition 5.2]{Ros-Unip} (see also \cite[Lemma 5.5]{Ros24}) we give the following definition.

\begin{defin}
\label{def:Defect in Spetses, chains}
Let $\sigma\in\mathcal{O}(\cT_e(\mathbb{G}))$ with minimal term $\LL(\sigma)$ and consider a $\Phi$-cuspidal pair $(\mathbb{L},\lambda)$ of $\mathbb{L}(\sigma)$. For every non-negative integer $d$ we define
\[\k^d_{\rm u}(\mathbb{G}(q),\sigma, (\mathbb{L},\lambda)):=\left|\irr^{d-d_{q,\ell}(\lambda)}\left(W_\mathbb{G}(\sigma,\mathbb{L},\lambda)\right)\right|.\]
Moreover, set
\[\k^d_{\rm u}(\mathbb{G}(q),\sigma):=\sum\limits_{(\mathbb{L},\lambda)}\k_{\rm u}^d\left(\mathbb{G}(q), \sigma,(\mathbb{L},\lambda)\right)\]
where $(\mathbb{L},\lambda)$ runs over  a set of representatives for the action of $W_{\mathbb{L}(\sigma)}$ on the set of $\Phi$-cuspidal pairs of $\mathbb{L}(\sigma)$.
\end{defin}

\begin{defin}
\label{def:Orbits space for alternating sum}
Below, we denote by $\mathcal{O}(\cT_e^\star(\mathbb{G}))$ the set of $W$-orbits of chains $\sigma$ with fixed largest term $\mathbb{G}$ and satisfying $\sigma\neq \{\mathbb{G}\}$. For any such chain $\sigma=\{\mathbb{L}_0=\mathbb{L}(\sigma)<\dots <\mathbb{L}_n=\mathbb{G}\}$, we write $|\sigma|:=n$ to denote the \textit{length} of $\sigma$.
\end{defin}

In the following proposition, we show that the invariant $\k^d_{\rm u}(\mathbb{G}(q),\sigma)$ from Definition \ref{def:Defect in Spetses, chains} agrees in the group case with $\k_{\rm u}^d(\G_\sigma^F)$ from \eqref{eq:Dade for unipotent, group case}.

\begin{prop}
Let $\G^F$ be a finite reductive group defined over a field $\mathbb{F}_q$ and let $\ell$ be an odd prime god for $\G$ and not dividing $q$, $e$ the order of $q$ modulo $\ell$, and consider a chain $\sigma$ of $e$-split Levi subgroups with smallest term $\L(\sigma)$. Suppose that \cite[Conjectured Parametrisation B]{Ros24} holds for $(\G,F)$ with respect to any unipotent $e$-cuspidal pair (see, for instance, \cite[Theorem C]{Ros-Unip} for the case of groups of types $\bf{A}$, $\bf{B}$, and $\bf{C}$). Then, for every non-negative integer $d$ and any $\sigma$, we have
\[\k^d_{\rm u}\left(\G^F_\sigma\right)=\sum\limits_{(\L,\lambda)}\left|\irr^{d-d(\lambda)}\left(W_\G(\sigma,(\L,\lambda))^F\right)\right|\]
where $(\L,\lambda)$ runs over a set of representatives for the $\L(\sigma)^F$-conjugacy classes of unipotent $e$-cuspidal pairs of $\L(\sigma)^F$ and $W_\G(\sigma,(\L,\lambda))^F:=\n_{\G^F_\sigma}(\L)_\lambda/\L^F$.
\end{prop}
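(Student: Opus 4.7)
The plan is to mirror the proof of \cite[Theorem B]{Ros-Unip}, but applied to the chain stabiliser $\G^F_\sigma$ rather than the ambient group $\G^F$. The three ingredients I would combine are: the definition of unipotent characters of $\G^F_\sigma$ from \cite[(5.10)]{Ros-Unip}; the $e$-Harish-Chandra partition of $\uch(\L(\sigma)^F)$; and the Clifford-theoretic bijections supplied by \cite[Conjectured Parametrisation B]{Ros24} under the standing hypothesis.

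First, I would unwind the definition of $\k^d_{\rm u}(\G^F_\sigma)$: by \cite[(5.10)]{Ros-Unip}, an irreducible character $\chi$ of $\G^F_\sigma$ counted by $\k^d_{\rm u}(\G^F_\sigma)$ is exactly one of $\ell$-defect $d$ lying over some character in $\uch(\L(\sigma)^F)$. Invoking the $e$-Harish-Chandra decomposition
\[\uch(\L(\sigma)^F)=\coprod_{(\L,\lambda)}\uch(\L(\sigma)^F)_{(\L,\lambda)},\]
where $(\L,\lambda)$ runs over representatives of $\L(\sigma)^F$-conjugacy classes of unipotent $e$-cuspidal pairs of $\L(\sigma)^F$, and noting that every $\G^F_\sigma$-orbit of unipotent characters of $\L(\sigma)^F$ is a union of $\L(\sigma)^F$-orbits, I obtain
\[\k^d_{\rm u}(\G^F_\sigma)=\sum_{(\L,\lambda)}\bigl|\irr^d\bigl(\G^F_\sigma\,\bigm|\,\uch(\L(\sigma)^F)_{(\L,\lambda)}\bigr)\bigr|.\]

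Second, I would apply \cite[Conjectured Parametrisation B]{Ros24} to each unipotent $e$-cuspidal pair $(\L,\lambda)$. By hypothesis this produces a bijection between $\irr\bigl(\G^F_\sigma\mid \uch(\L(\sigma)^F)_{(\L,\lambda)}\bigr)$ and $\irr(W_\G(\sigma,(\L,\lambda))^F)=\irr(\n_{\G^F_\sigma}(\L)_\lambda/\L^F)$ that is compatible with the chain $\sigma$ and shifts the $\ell$-defect by the constant $-d(\lambda)$. Inserting this identification into the previous display yields precisely the stated formula.

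The principal obstacle I foresee lies in certifying the defect shift, namely that the parametrisation satisfies
\[d_\ell(\chi)-d(\lambda)=d_\ell(\eta)\]
whenever $\chi\in\irr(\G^F_\sigma)$ corresponds to $\eta\in\irr(W_\G(\sigma,(\L,\lambda))^F)$. This is the chain-level analogue of the degree identity used in Proposition \ref{prop:e-HC bijection}, and it reduces, via the identification $W_\G(\sigma,(\L,\lambda))^F=\n_{\G^F_\sigma}(\L)_\lambda/\L^F$, to a direct comparison of $\ell$-parts of the order polynomials of the $e$-split Levi subgroups appearing in $\sigma$ together with the divisibility of relative degrees $\chi(1)/\lambda(1)$ by $[\G^F_\sigma:\n_{\G^F_\sigma}(\L)_\lambda]$. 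This is where the strength of the assumption in \cite[Conjectured Parametrisation B]{Ros24}, as established in \cite[Theorem C]{Ros-Unip} for classical types, is genuinely needed.
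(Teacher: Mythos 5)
Your overall plan matches the paper's: unwind the definition of $\k^d_{\rm u}(\G^F_\sigma)$ via the $e$-Harish--Chandra partition of $\uch(\L(\sigma)^F)$, sum over $\L(\sigma)^F$-classes of unipotent $e$-cuspidal pairs $(\L,\lambda)$, and then feed in Parametrisation B. However, you have conflated two distinct steps and as a result you flag the wrong part as the ``principal obstacle.'' Parametrisation B does \emph{not} directly supply a bijection to $\irr(W_\G(\sigma,(\L,\lambda))^F)$ with a built-in defect shift of $-d(\lambda)$. What it gives is a defect-\emph{preserving} bijection
\[\irr^d\bigl(\G^F_\sigma\mid\E(\L(\sigma)^F,(\L,\lambda))\bigr)\longleftrightarrow\irr^d\bigl(\n_{\G^F_\sigma}(\L)\mid\lambda\bigr).\]
The passage from the right-hand side to $\irr^{d-d(\lambda)}(\n_{\G^F_\sigma}(\L)_\lambda/\L^F)$ --- i.e.\ both the quotient by $\L^F$ and the defect shift by $d(\lambda)$ --- is an independent, purely local-Clifford-theoretic step: it follows from Clifford's and Gallagher's correspondences combined with the defect formula of Sp\"ath \cite[Corollary 2.7]{Spa23}. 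Your suggested reduction to ``a direct comparison of $\ell$-parts of order polynomials and divisibility of relative degrees'' is a detour; the relevant fact is already available in the literature and does not rest on the strength of Parametrisation B at all. So the gap in your argument is precisely the misattribution: the defect shift is routine, while the nontrivial input from the hypothesis is the defect-preserving bijection at the level of $\n_{\G^F_\sigma}(\L)$ above $\lambda$.

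One further small point: your first display invokes a generic ``$e$-Harish--Chandra decomposition'' of $\uch(\L(\sigma)^F)$; the paper obtains the corresponding partition from \cite[Theorem 3.2]{Bro-Mal-Mic93} together with Clifford theory, and you should cite that to make the first step precise.
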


\begin{proof}
By definition, $\k^d_{\rm u}(\G^F_\sigma)$ is the number of irreducible characters of the stabiliser $\G^F_\sigma$ with $\ell$-defect $d$ and lying above some unipotent character of $\L(\sigma)^F$. Using Clifford theory and \cite[Theorem 3.2]{Bro-Mal-Mic93} we get
\begin{equation}
\label{eq:Rmk, 1}
\k^d_{\rm u}\left(\G^F_\sigma\right)=\sum\limits_{(\L,\lambda)}\left|\irr^d\left(\G^F_\sigma\enspace \middle|\enspace \E(\L(\sigma)^F, (\L,\lambda))\right)\right|
\end{equation}
where $(\L,\lambda)$ runs over a set of representatives for the action of $\L(\sigma)^F$ on the set of unipotent $e$-cuspidal pairs of $\L(\sigma)^F$. Since \cite[Conjectured Parametrisation B]{Ros24}  holds for $(\G,F)$ with respect to each $(\L,\lambda)$, we deduce that
\begin{equation}
\label{eq:Rmk, 2}
\left|\irr^d\left(\G^F_\sigma\enspace \middle|\enspace \E(\L(\sigma)^F, (\L,\lambda))\right)\right|=\left|\irr^d\left(\n_{\G^F_\sigma}(\L)\enspace \middle|\enspace \lambda\right)\right|.
\end{equation}
On the other hand by \cite[Corollary 2.7]{Spa23}, and applying Clifford's and Gallagher's correspondences (see \cite[Theorem 1.20 and Corollary 1.23]{Nav18}) we know that
\begin{equation}
\label{eq:Rmk, 3}
\left|\irr^d\left(\n_{\G^F_\sigma}(\L)\enspace \middle|\enspace \lambda\right)\right|=\left|\irr^{d-d(\lambda)}\left(\n_{\G^F_\sigma}(\L)_\lambda/\L^F\right)\right|.
\end{equation}
Combining \eqref{eq:Rmk, 1}, \eqref{eq:Rmk, 2}, and \eqref{eq:Rmk, 3} we finally get the equality from the statement.
\end{proof}

As remarked above, in the case of a finite reductive group, the invariants from Definition \ref{def:Defect in Spetses, chains} count the number of irreducible ``unipotent'' characters of the stabiliser $\G_\sigma^F$ with defect $d$. It would be interesting to know if the same can be shown for a $\mathbb{Z}_\ell$-spets $\mathbb{G}$ and a chain $\sigma\in\mathcal{O}(\cT_e(\mathbb{G}))$. More precisely, if there exists a set $\uch(\mathbb{G},\sigma)$ of unipotent characters (analogous to those defined in \cite{Bro-Mal-Mic14}), and an appropriate notion of defect, such that the invariant $\k_{\rm u}^d(\mathbb{G}(q),\sigma)$ from Definition \ref{def:Defect in Spetses, chains} counts the number of characters in $\uch(\mathbb{G},\sigma)$ with defect $d$. Whether or not this is possible, Definition \ref{def:Defect in Spetses, chains} allows us to prove the following analogue of Dade's Conjecture. We refer the reader to \cite[Conjecture C]{Ros24}, \cite[Theorem B]{Ros-Unip}, and the question raised at the end of \cite{Broue22}.

\begin{theo}
\label{thm:Dade-like}
Consider the setting of Proposition \ref{prop:e-HC bijection}. With the notation introduced in Definition \ref{def:Defect in Spetses} and Definition \ref{def:Defect in Spetses, chains}, we have
\[\k^d_{\rm u}(\mathbb{G}(q))-\k^d_{\rm u,c}(\mathbb{G}(q))=\sum\limits_{\sigma\in\mathcal{O}\left(\cT_e^\star(\mathbb{G})\right)}(-1)^{|\sigma|+1}\k^d_{\rm u}(\mathbb{G}(q),\sigma)\]
for every non-negative integer $d$.
\end{theo}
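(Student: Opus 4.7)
The plan is to reduce both sides to alternating sums of relative Weyl group character counts via $\Phi$-Harish-Chandra theory, then prove the resulting identity by a cancellation argument on the poset of $e$-split Levi subcosets above a cuspidal Levi.

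First, using the partition \eqref{eq:Partition e-HC} together with Proposition \ref{prop:e-HC bijection}, I would rewrite
\[\k^d_{\rm u}(\GG(q))-\k^d_{\rm u,c}(\GG(q)) = \sum_{(\LL,\lambda)/W,\ \LL\ne\GG} |\Irr^{d-d_{q,\ell}(\lambda)}(W_\GG(\LL,\lambda))|,\]
noting that each pair $(\GG,\chi)$ contributes $1$ to $\k^d_{\rm u}(\GG(q))$ exactly when $d_{q,\ell}(\chi)=d$ and $\chi$ is $\Phi$-cuspidal, thereby matching $\k^d_{\rm u,c}(\GG(q))$ (here one uses that $W_\GG(\GG,\chi)$ is trivial).

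Next, I would reorganize the RHS by $W$-orbits of $\Phi$-cuspidal pairs of $\GG$, using the identification of $\Phi$-cuspidal pairs of $\LL(\sigma)$ with those pairs $(\LL,\lambda)$ of $\GG$ satisfying $\LL\le\LL(\sigma)$. For a fixed representative $(\LL_0,\lambda_0)$ of such a $W$-orbit with $\LL_0\ne\GG$, set $H:=W_\GG(\LL_0,\lambda_0)=\n_W(\LL_0)_{\lambda_0}/W_{\LL_0}$. Since $W_{\LL_0}\le W_\mathbb{K}\le\n_W(\mathbb{K})$ for every $\mathbb{K}\in[\LL_0,\GG]$, the natural action of $\n_W(\LL_0)_{\lambda_0}$ on the poset $P:=[\LL_0,\GG]$ descends to an $H$-action whose stabilizer of a chain $\sigma$ with largest term $\GG$ is exactly $W_\GG(\sigma,\LL_0,\lambda_0)$. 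Using this together with the containment $W_{\LL(\sigma)}\le\n_W(\sigma)$ and the $W$-conjugation invariance of $|\Irr^{d-d_{q,\ell}(\lambda)}(W_\GG(\sigma,\LL,\lambda))|$, I would identify the contribution of the orbit $[\LL_0,\lambda_0]$ to the RHS with
\[\sum_{[\sigma]_H,\ \sigma\in\cT_e^\star(\GG)\cap P}(-1)^{|\sigma|+1}|\Irr^{d-d_{q,\ell}(\lambda_0)}(W_\GG(\sigma,\LL_0,\lambda_0))|.\]

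Finally, I would evaluate this alternating sum by $H$-equivariantly pairing each chain $\sigma\in\cT_e^\star(\GG)\cap P$ with $\sigma\cup\{\LL_0\}$ (when $\LL_0\notin\sigma$) or $\sigma\setminus\{\LL_0\}$ (when $\LL_0\in\sigma$ and $|\sigma|\ge 2$). Since $\LL_0$ is $H$-fixed, the $H$-stabilizers agree while the lengths differ by one, so the contributions cancel. The only unpaired $H$-orbit is that of $\{\LL_0<\GG\}$, whose would-be partner $\{\GG\}$ is excluded from $\cT_e^\star(\GG)$; it contributes $(-1)^{2}|\Irr^{d-d_{q,\ell}(\lambda_0)}(H)|$, and summing over the $W$-orbits $[\LL_0,\lambda_0]$ with $\LL_0\ne\GG$ recovers the LHS.

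The principal obstacle is the orbit bookkeeping in the reorganization step: one must carefully reconcile the RHS's indexing via $W_{\LL(\sigma)}$-orbits of cuspidal pairs of $\LL(\sigma)$ with the natural indexing by $H$-orbits of chains in $P$. This relies on the normality $W_{\LL(\sigma)}\trianglelefteq\n_W(\LL(\sigma))$ and on the fact that $|\Irr^{d-d_{q,\ell}(\lambda)}(W_\GG(\sigma,\LL,\lambda))|$ depends only on the $\n_W(\sigma)$-orbit of $(\LL,\lambda)$.
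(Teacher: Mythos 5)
Your proposal is essentially the same as the paper's proof: both establish the identity by pairing, for a fixed $\Phi$-cuspidal pair $(\LL,\lambda)$, each chain $\sigma$ with $\sigma\cup\{\LL\}$ or $\sigma\setminus\{\LL\}$, observing via Definition \ref{def:Relative Weyl group, chains} that $W_\GG(\sigma,(\LL,\lambda))$ is unchanged while $|\sigma|$ shifts by one, so all contributions cancel except those from $\sigma_\LL=\{\LL<\GG\}$, which Proposition \ref{prop:e-HC bijection} and \eqref{eq:Partition e-HC} then match to the left-hand side. The only difference is that you make the orbit bookkeeping explicit by first reindexing the double sum by $W$-orbits of $\Phi$-cuspidal pairs and then by $H$-orbits of chains in $[\LL_0,\GG]$ (correctly using that $W_{\LL_0}$ fixes $[\LL_0,\GG]$ pointwise), whereas the paper performs the cancellation directly on triples $(\sigma,(\LL,\lambda))$ without spelling out this reorganization.
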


\begin{proof}
Suppose that $\sigma\in\mathcal{O}(\cT_e^\star(\mathbb{G}))$ has final term $\mathbb{L}(\sigma)$ and consider a $\Phi$-cuspidal pair $(\mathbb{L},\lambda)$ of $\mathbb{L}(\sigma)$. If $\mathbb{L}$ is a proper reflection subcoset of $\mathbb{L}(\sigma)$, then we define a new chain $\rho$ by adding $\mathbb{L}$ as the final term of $\sigma$. On the other hand, if $\mathbb{L}$ coincides with $\mathbb{L}(\sigma)$, then we define $\rho$ by removing the term $\mathbb{L}(\sigma)$ from $\sigma$. With this choice of $\rho$, it follows from Definition \ref{def:Relative Weyl group, chains} that $W_\mathbb{G}(\sigma,(\mathbb{L},\lambda))=W_\mathbb{G}(\rho,(\mathbb{L},\lambda))$ and hence
\[\k^d_{\rm u}(\mathbb{G}(q),\sigma, (\mathbb{L},\lambda))=\k^d_{\rm u}(\mathbb{G}(q),\rho, (\mathbb{L},\lambda)).\]
Since $|\sigma|=|\rho|\pm 1$, it follows that the contribution from $\k^d_{\rm u}(\mathbb{G}(q),\sigma, (\mathbb{L},\lambda))$ to $\k^d_{\rm u}(\mathbb{G}(q),\sigma)$ cancels out the contribution of $\k^d_{\rm u}(\mathbb{G}(q),\rho, (\mathbb{L},\lambda))$ to $\k^d_{\rm u}(\mathbb{G}(q),\rho)$. In this way we can cancel out the contribution to the alternating sum coming from all triples $(\sigma,(\mathbb{L},\lambda))$ with the exception of those with $\sigma=\{\mathbb{L}<\mathbb{G}\}=:\sigma_\mathbb{L}$. Therefore, we get
\[\sum\limits_{\sigma\in\mathcal{O}\left(\cT_e^\star(\mathbb{G})\right)}(-1)^{|\sigma|+1}\k^d_{\rm u}(\mathbb{G}(q),\sigma)=\sum\limits_{(\mathbb{L},\lambda)}\k_{\rm u}^d(\mathbb{G}(q),\sigma_\mathbb{L},(\mathbb{L},\lambda))\]
where $(\mathbb{L},\lambda)$ runs over a set of representatives for the $W$-conjugacy classes of $\Phi$-cuspidal pairs of $\mathbb{G}$ such that $\mathbb{L}<\mathbb{G}$. Finally, since $W_\mathbb{G}(\sigma_\mathbb{L},(\mathbb{L},\lambda))=W_\mathbb{G}(\mathbb{L},\lambda)$ by Definition \ref{def:Relative Weyl group, chains}, it follows from Proposition \ref{prop:e-HC bijection} and \eqref{eq:Partition e-HC} that
\[\sum\limits_{(\mathbb{L},\lambda)}\k_{\rm u}^d(\mathbb{G}(q),\sigma_\mathbb{L},(\mathbb{L},\lambda))=\sum\limits_{(\mathbb{L},\lambda)}\k_{\rm u}^d(\mathbb{G}(q),(\mathbb{L},\lambda))=\k_{\rm u}^d(\mathbb{G}(q))-\k_{\rm u, c}^d(\mathbb{G}(q))\]
as required. This completes the proof.
\end{proof}

\section{Appendix}

\subsection{Quillen's theorem A}

Recall that if $\cC$ is a small category, we denote its classifying space by $|\mathcal{C}|$. If $\cC$ and $\cD$ are small categories and $F: \cC \rightarrow \cD$ is a functor then $F$ induces a continuous map $|\cC| \rightarrow |\cD|$ (see, for instance, \cite[Setion 11.2]{Ric20}). Quillen's Theorem A \cite{Qui73}, which we recall below, is a criterion for this map to be a homotopy equivalence.

\begin{defin}[Comma category]
Let $f:\mathcal{C}\to \mathcal{D}$ be a functor between small categories and $y$ an object of $\mathcal{D}$. The \textit{comma category} $f/y$ is the category whose objects are the pairs $(x,\beta)$ where $x$ is an object of $\mathcal{C}$ and $\beta:f(x)\to y$ is a morphism in $\mathcal{D}$. A morphism between two objects $(x,\beta)$ and $(x',\beta')$ in the category $f/y$ is given by a morphism $\alpha:x\to x'$ in $\mathcal{C}$ such that $\beta'\circ f(\alpha)=\beta$.
\end{defin}

We can then state Quillen's criterion as follows.

\begin{theo}[Quillen's Theorem A]
\label{thm:Quillen theorem A}
Let $f:\mathcal{C}\to \mathcal{D}$ be a functor between small categories and suppose that $|f/y|$ is contractible for every object $y$ of $\mathcal{D}$. Then, $f$ induces a homotopy equivalence of classifying spaces $|\mathcal{C}|\simeq|\mathcal{D}|$.
\end{theo}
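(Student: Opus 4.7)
The plan is to apply the bisimplicial realisation lemma to a bisimplicial set $X_{\bullet,\bullet}$ interpolating between $N\mathcal{C}$ and $N\mathcal{D}$. Both natural projections from $X$ to $N\mathcal{C}$ and $N\mathcal{D}$ will be shown to induce weak equivalences after geometric realisation: one because of a universal contractibility (initial object in a coslice category), the other because of the hypothesis on the comma categories $f/y$.

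Concretely, I would define the $(p,q)$-simplices of $X$ to be the tuples
\[
\bigl(x_0 \to \cdots \to x_p,\ \beta\colon f(x_p)\to y_0,\ y_0 \to \cdots \to y_q\bigr),
\]
with the evident face and degeneracy maps: the extremal horizontal face either deletes $x_0$ or precomposes $\beta$ with $f$ of the last $\mathcal{C}$-arrow $x_{p-1}\to x_p$, and the extremal vertical face either deletes $y_q$ or postcomposes $\beta$ with the first $\mathcal{D}$-arrow $y_0\to y_1$. Viewing $N\mathcal{C}$ and $N\mathcal{D}$ as bisimplicial sets constant in the second and first coordinate respectively, the forgetful maps give projections $\pi_\mathcal{C}\colon X\to N\mathcal{C}$ and $\pi_\mathcal{D}\colon X\to N\mathcal{D}$.

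To analyse the fibres of $\pi_\mathcal{C}$, I would fix $p$ and a chain $\sigma=(x_0\to\cdots\to x_p)$: the preimage of $\sigma$ inside the simplicial set $X_{p,\bullet}$ is exactly the nerve of the coslice category $f(x_p)\downarrow\mathcal{D}$, which has the initial object $\mathrm{id}_{f(x_p)}$ and is therefore contractible. Similarly, for $\pi_\mathcal{D}$, fixing $q$ and $\tau=(y_0\to\cdots\to y_q)$, the preimage of $\tau$ inside $X_{\bullet,q}$ is canonically isomorphic to the nerve of the comma category $f/y_0$, contractible by hypothesis. In both cases the restrictions to fixed $p$ (respectively fixed $q$) are disjoint unions of acyclic maps and hence weak equivalences, so the bisimplicial realisation lemma yields the zig-zag
\[
|\mathcal{C}|=|N\mathcal{C}|\xleftarrow{\ \sim\ }|X|\xrightarrow{\ \sim\ }|N\mathcal{D}|=|\mathcal{D}|.
\]

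The remaining and main bookkeeping obstacle is to verify that this zig-zag equivalence is homotopic to $|f|$; this follows from the natural section of $\pi_\mathcal{C}$ sending an object $x$ of $\mathcal{C}$ to the $(0,0)$-simplex $(x,\mathrm{id}_{f(x)},f(x))$, whose image under $\pi_\mathcal{D}$ is $f(x)$. The contractibility computations are entirely formal, and the bisimplicial realisation lemma can be invoked as a standard consequence of the Reedy structure on bisimplicial sets, so no genuinely new input is required beyond the hypothesis.
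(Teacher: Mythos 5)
The paper offers no proof of this statement: it is recalled as a classical result with a citation to Quillen's original article, and your argument is essentially the proof given there (and in standard references such as Goerss--Jardine), so in that sense you have reconstructed the intended source argument rather than diverged from it. The construction of the bisimplicial set, the identification of the fibres of the two projections with nerves of coslice categories and of the comma categories $f/y_0$, and the appeal to the levelwise realisation lemma are all correct. The one point to repair is the final compatibility with $|f|$: the assignment $x\mapsto(x,\mathrm{id}_{f(x)},f(x))$ does not extend to a map of bisimplicial sets $N\mathcal{C}\to X$ (the horizontal face deleting $x_p$ produces $\beta=f(x_{p-1}\to x_p)$ rather than an identity), so it is not a section of $\pi_{\mathcal{C}}$; the standard fix is to compare $X$ with the analogous bisimplicial set built from $\mathrm{id}_{\mathcal{D}}$ via a commuting square over $Nf$, and then conclude by two-out-of-three.
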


A useful means of testing the contractibility condition in the hypothesis of Theorem \ref{thm:Quillen theorem A} is provided by Lemma \ref{lem:Filtered categories are contractible}. To state it, we need another definition.

\begin{defin}
\label{def:filtered}
A small category $\mathcal{C}$ is called \textit{filtered} if it is non-empty and
\begin{enumerate}
\item for every pair of objects $x$ and $x'$ of $\mathcal{C}$, there exists an object $x''$ in $\mathcal{C}$ and morphisms $\alpha:x\to x''$ and $\alpha':x'\to x''$; and
\item for every parallel morphisms $\alpha:x\to x'$ and $\alpha':x\to x'$ in $\mathcal{C}$, there exists a morphism $\alpha'':x'\to x''$ such that $\alpha''\circ \alpha=\alpha''\circ\alpha'$.
\end{enumerate} 
\end{defin}

\begin{lem}
\label{lem:Filtered categories are contractible}
If $\mathcal{C}$ is a small filtered category, then $|\mathcal{C}|$ is contractible.
\end{lem}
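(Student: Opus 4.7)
The plan is to show that $|\cC|$ is contractible by proving that all of its homotopy groups vanish and then invoking Whitehead's theorem (which applies since $|\cC|$ is a CW complex). Non-emptiness of $\cC$ ensures $|\cC|$ is non-empty, so we may fix a basepoint.

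The first step would be to recall the standard categorical reformulation of filteredness: conditions (i) and (ii) of Definition \ref{def:filtered}, together with non-emptiness, are equivalent to the assertion that every finite diagram $D : J \to \cC$ (with $J$ a finite category) admits a cocone, i.e.\ an object $x^\star \in \cC$ together with morphisms $\mu_j : D(j) \to x^\star$ such that $\mu_{j'}\circ D(\alpha) = \mu_j$ for every morphism $\alpha : j \to j'$ in $J$. This is proved by a straightforward finite induction: apply (i) iteratively to find a single object $x_0$ receiving morphisms from all the $D(j)$, and then apply (ii) iteratively to coequalise the mismatched composites, processing the finite list of relations one at a time and absorbing each equaliser into $x_0$ by post-composition.

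Next, given $n\geq 0$ and a continuous map $f : S^n \to |\cC|$, its image is compact and therefore lies in a finite subcomplex $K\subseteq |\cC|$. The simplices of $K$ are chains of composable morphisms in $\cC$, so $K$ is spanned by finitely many objects and morphisms of $\cC$, which together constitute a finite diagram $D$. Applying the cocone property of the previous paragraph to $D$ yields an object $x^\star$ and morphisms $\mu$ from each vertex of $K$ to $x^\star$ satisfying the needed commutativities. These data extend every $k$-simplex $x_{i_0}\to\cdots\to x_{i_k}$ of $K$ to a genuine $(k{+}1)$-simplex $x_{i_0}\to\cdots\to x_{i_k}\to x^\star$ of $|\cC|$, and the union of these extensions forms a simplicial cone on $K$ with apex $x^\star$. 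This cone provides an explicit null-homotopy of the inclusion $K\hookrightarrow |\cC|$, hence of $f$. Therefore $\pi_n(|\cC|, x_0)=0$ for every $n\geq 0$, and Whitehead's theorem concludes that $|\cC|$ is contractible.

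The main obstacle is verifying that the cocone extracted from the filtered conditions really assembles to a simplicial cone inside $|\cC|$: the identity $\mu_{j'}\circ D(\alpha)=\mu_j$ is precisely what is required for the appended simplices $x_{i_0}\to\cdots\to x_{i_k}\to x^\star$ to be honest chains in $\cC$, and it is here that condition (ii) plays its essential role beyond condition (i). Since this cocone statement is a classical fact from the theory of filtered categories, one could alternatively simply cite a standard reference (e.g.\ Mac Lane's textbook) and skip the inductive verification.
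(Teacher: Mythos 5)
Your argument is correct, and it is the standard self-contained proof of this classical fact. The paper itself offers no argument at all here: it simply cites \cite[Proposition 11.3.10]{Ric20}, so your write-up supplies what the paper outsources. The three ingredients you use are all sound: (1) the equivalence of Definition \ref{def:filtered} with the existence of cocones over finite diagrams (your inductive reduction, first using (i) to collect the finitely many objects and then using (ii) once per morphism to coequalise the two resulting parallel composites, is exactly the standard Mac Lane argument; note that one should work with the finite \emph{graph} of objects and morphisms spanning $K$ rather than the subcategory they generate, which need not be finite -- your induction does precisely this); (2) compactness of the image of $S^n$ forcing it into a finite subcomplex of the CW complex $|\cC|$; and (3) Whitehead's theorem. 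The only point deserving care is the one you yourself flag: the appended chains $x_{i_0}\to\cdots\to x_{i_k}\to x^\star$ are closed under faces precisely because the face omitting $x_{i_k}$ replaces the last arrow by $\mu_{x_{i_k}}\circ(x_{i_{k-1}}\to x_{i_k})=\mu_{x_{i_{k-1}}}$, which is the cocone identity; a slicker way to package this, avoiding any fuss about degeneracies or about $x^\star$ already lying in $K$, is to observe that a cocone is a natural transformation to a constant functor and therefore induces a simplicial homotopy from the inclusion of $K$ to a constant map. Either formulation is acceptable, and nothing essential is missing.
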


\begin{proof}
See, for instance, \cite[Proposition 11.3.10]{Ric20}.
\end{proof}

\subsection{The orbit space of a $G$-simplicial complex}
\label{sec:Orbit space, simplicial complex}

Let $\Delta$ be a simplicial complex. We write
\begin{enumerate}
\item $S(\Delta)$ for the poset consisting of the simplices of $\Delta$ ordered by inclusion;
\item $\sd(\Delta):=|\Delta(S(\Delta))|$ for the \textit{barycentric subdivision} of $\Delta$. 
\end{enumerate}

It is well known that $\sd(\Delta) \simeq |\Delta|$, where $|\Delta|$ denotes the geometric realisation of $\Delta$ (see \cite[p.217]{Ben98}). By abuse of notation, we often denote the geometric realisation $|\Delta|$ simply by $\Delta$. If $\Delta$ is, in addition, a $G$-simplicial complex for some finite group $G$ then $\sd(\Delta)$ is a $G$-space and we write 
\[\mathcal{O}_G(\Delta):=\sd(\Delta)/G.\]
for the \textit{$G$-orbit space} of $\Delta$. 

We require a criterion for the contractibility of a certain type of $G$-orbit space due to Bux \cite{Bux99}. This is a reformulation of Morse-theoretic techniques first introduced by Bestvina and Brady \cite{Bes-Bra97}. 

\begin{defin}
\label{d:flag}
Let $\Gamma$ be a directed graph with vertex set $V(\Gamma)$.  The \textit{flag complex} $X(\Gamma)$ of $\Gamma$ is the simplicial complex with vertex set $V(\Gamma)$  and where $\sigma \subseteq V(\Gamma)$  is a simplex if any pair of vertices in $\sigma$ is joined by an edge in $\Gamma$. 
\end{defin}

We require the following additional terminology.

\begin{defin}
\label{d:morse}
Let $\Gamma$ be a directed graph with vertex set $V(\Gamma)$, and $X(\Gamma)$ be the associated flag complex.
\begin{enumerate}
\item  $x \in V(\Gamma)$ is \textit{minimal} if there are no directed edges of form $x\to y$ in $\Gamma$; 
\item A simplex $\sigma$ in $X(\Gamma)$ is \textit{minimal} if it contains a minimal vertex of $\Gamma$;
\item $\Gamma$ is \textit{well-founded} if there are no directed paths of infinite length;
\item 
If $x \in V(\Gamma)$, write $\Gamma_{\downarrow}^x$ for the subgraph of $\Gamma$ spanned by the endpoints $y$ of edges $x\to y$ in $\Gamma$;
\item If $\sigma$ is a simplex in $X(\Gamma)$ set $\Gamma_{\downarrow}^\sigma:=\displaystyle{\bigcap_{x\in \sigma}\Gamma_{\downarrow}^x}$.
\end{enumerate}
\end{defin}

We can now state Bux's criterion.

\begin{lem}
\label{lem:Morse theory}
Let $G$ be a finite group and $\Gamma$ be a directed $G$-graph (that is $G$ acts on $\Gamma$ by directed graph automorphisms). Suppose that $\Gamma$ is well-founded and that
\begin{enumerate}
\item $G$ acts transitively on the set of minimal vertices of $\Gamma$;
\item for every non-minimal simplex $\sigma\in X(\Gamma)$, the stabiliser $G_\sigma$ acts transitively on the set of minimal elements of $\Gamma_{\downarrow}^\sigma$.
\end{enumerate} 
 Then $X(\Gamma)$ is a $G$-simplicial complex and $\mathcal{O}_G(X(\Gamma))$ is contractible.
\end{lem}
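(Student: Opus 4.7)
The plan is to treat the two assertions separately. The first, that $X(\Gamma)$ is a $G$-simplicial complex, is immediate: since $G$ acts on $\Gamma$ by directed-graph automorphisms, it preserves the adjacency relation defining the simplices of $X(\Gamma)$, and hence acts by simplicial automorphisms on $X(\Gamma)$. For the second assertion, that $\mathcal{O}_G(X(\Gamma))$ is contractible, I would use well-foundedness to set up a Morse-theoretic induction in the spirit of Bestvina--Brady. Concretely, well-foundedness permits us to assign to each vertex $x\in V(\Gamma)$ a height $h(x)\in\mathbb{N}$ with $h(x)=0$ iff $x$ is minimal, and extending by $h(\sigma):=\max\{h(x)\mid x\in\sigma\}$ yields a $G$-invariant filtration $X_0\subseteq X_1\subseteq\cdots$ of $X(\Gamma)$ (with $X_n:=\{\sigma\in X(\Gamma)\mid h(\sigma)\leq n\}$) whose induced filtration $\mathcal{O}_G(X_0)\subseteq\mathcal{O}_G(X_1)\subseteq\cdots$ has colimit $\mathcal{O}_G(X(\Gamma))$.

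The base case is immediate from hypothesis (i): since $G$ acts transitively on the minimal vertices, $\mathcal{O}_G(X_0)$ is a single point and hence contractible. For the inductive step I plan to construct a $G$-equivariant discrete Morse matching by pairing each non-minimal simplex $\sigma$ of height $n$ with the simplex $\sigma\cup\{v_\sigma\}$, where $v_\sigma$ is a minimal vertex of $\Gamma_{\downarrow}^\sigma$. Such a $v_\sigma$ exists because $\Gamma_{\downarrow}^\sigma$ is non-empty and inherits well-foundedness from $\Gamma$, and crucially hypothesis (ii) guarantees that the $G_\sigma$-orbit of $v_\sigma$ is uniquely determined. Consequently, after passing to $G$-orbits the matching descends to a well-defined pairing on the cells of $\mathcal{O}_G(X_n)\setminus\mathcal{O}_G(X_{n-1})$, exhibiting $\mathcal{O}_G(X_n)$ as obtained from $\mathcal{O}_G(X_{n-1})$ by a sequence of elementary collapses and hence as deformation retracting onto $\mathcal{O}_G(X_{n-1})$.

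The main obstacle will be the bookkeeping required to verify that this matching really does descend to the orbit space as a valid discrete Morse function: one must check that $\sigma\cup\{v_\sigma\}$ is indeed a simplex of $X(\Gamma)$ (which follows because $v_\sigma$ is joined by an edge to every vertex of $\sigma$), that the induced pairing on the barycentric subdivision $\mathrm{sd}(X(\Gamma))$ used to define $\mathcal{O}_G(X(\Gamma))$ is acyclic in the sense of Forman, and that the stabilisers of $\sigma$ and $\sigma\cup\{v_\sigma\}$ behave compatibly enough under the quotient so that local collapses assemble into a global deformation retraction of $\mathcal{O}_G(X_n)$ onto $\mathcal{O}_G(X_{n-1})$. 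Once this is settled, each inclusion $\mathcal{O}_G(X_{n-1})\hookrightarrow\mathcal{O}_G(X_n)$ is a homotopy equivalence, and combining with the contractibility of $\mathcal{O}_G(X_0)$ and taking colimits yields the desired contractibility of $\mathcal{O}_G(X(\Gamma))$.
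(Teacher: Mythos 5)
Your overall strategy --- filter $X(\Gamma)$ by a height function coming from well-foundedness and collapse the orbit space level by level in the style of Bestvina--Brady Morse theory --- is the right one: it is essentially the strategy of Bux, and indeed the paper does not prove this lemma at all but simply cites \cite[Theorem 12 and Corollary 13]{Bux99}, so what you are attempting is a reproof of that result. Your treatment of the first assertion is fine, and so is the base case (no two minimal vertices of $\Gamma$ are adjacent, since an edge $x\to y$ forces $x$ to be non-minimal, so $X_0$ is discrete and hypothesis (i) makes $\mathcal{O}_G(X_0)$ a point).

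The inductive step, however, has a genuine gap, located exactly where you defer to ``bookkeeping''. The pairing $\sigma\leftrightarrow\sigma\cup\{v_\sigma\}$ is a pairing of \emph{simplices of $X(\Gamma)$}, whereas the cells of $\mathcal{O}_G(X(\Gamma))=\sd(X(\Gamma))/G$ are $G$-orbits of \emph{chains} of simplices; as cells of the orbit space, $[\sigma]$ and $[\sigma\cup\{v_\sigma\}]$ are both vertices, so your pairing is not a matching of cells of adjacent dimensions and produces no elementary collapses of the quotient by itself. The naive promotion to chains --- pair a chain $c$ containing $\sigma$ with $c\cup\{\sigma\cup\{v_\sigma\}\}$ --- already fails to be well defined: if $\sigma<\tau$ both belong to $c$, then $v_\sigma$ need not be a vertex of $\tau$, so $\sigma\cup\{v_\sigma\}$ need not be comparable with $\tau$ and the enlarged set need not be a chain. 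One must also decide which member of a chain to modify, verify acyclicity of the resulting matching on orbits, and control the critical cells; hypothesis (ii) (note that the minimal vertices of $\Gamma_{\downarrow}^{\sigma}$ need not be minimal in $\Gamma$, so $\sigma\cup\{v_\sigma\}$ need not be a minimal simplex) is needed precisely to make the relevant descending links in the quotient into cones. This is the actual content of Bux's Theorem 12, not bookkeeping. A smaller point: well-foundedness alone only yields an ordinal-valued rank, so $h(x)\in\mathbb{N}$ and the $\mathbb{N}$-indexed filtration require $\Gamma$ to be (locally) finite, or else a transfinite induction. If you want a self-contained argument, follow Bux and prove contractibility of descending links in $\sd(X(\Gamma))/G$ rather than constructing an explicit matching; otherwise, cite \cite[Theorem 12 and Corollary 13]{Bux99} as the authors do.
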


\begin{proof}
This follows from \cite[Theorem 12 and Corollary 13]{Bux99}.
\end{proof}

\subsection{The orbit space of an $EI$-category}
\label{sec:Orbit space, categories}

Here, we follow the presentation given in \cite{Lin05} and \cite{Lin09}. Let  $\mathcal{C}$ be an \textit{$EI$-category}, that is $\cC$ is a small category in which every endomorphism is an automorphism. For a non-negative integer $m$, let $[m]$ be the category with objects $\{0,1, \dots, m\}$ and a morphism from $i$ to $j$ whenever $i\leq j$. The \textit{subdivision category} $S(\mathcal{C})$ of $\mathcal{C}$  is the category whose objects consist of all faithful functors $\sigma:[m]\to \mathcal{C}$ where, for objects $\sigma:[m]\to \mathcal{C}$ and $\tau:[n]\to \mathcal{C}$,
\[\Hom_{S(\cC)}(\sigma,\tau) =\{(\alpha,\mu) \mid \alpha:[m]\to [n] \mbox{ is a functor and } \mu:\sigma\to \tau\circ \alpha \mbox{ is a natural isomorphism }\}\]
(see \cite[Definition 1.1]{Lin05} and \cite[Definition 2.1]{Lin09}).

Note that an object $\sigma$ of $S(\cC)$ can be represented more explicitly as a chain
\begin{center}
\begin{tikzcd}
X_0\arrow[r, "\varphi_0"] &X_1\arrow[r, "\varphi_1"]&\dots\arrow[r, "\varphi_{m-1}"] &X_m
\end{tikzcd}
\end{center}
where each $\varphi_i$ is a non-isomorphism in $\mathcal{C}$. The following property of the subdivision category can be found in \cite[Proposition 1.2]{Lin05}

\begin{lem}
\label{lem:Subdivision is EI}
If $\mathcal{C}$ is an EI-category, then the subdivision category $S(\mathcal{C})$ is an EI-category.
\end{lem}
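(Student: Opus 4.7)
The plan is to verify directly that every endomorphism of an object of $S(\cC)$ is an automorphism. The main auxiliary fact I would first establish is the following observation about an arbitrary EI-category $\cC$: \emph{every split monomorphism in $\cC$ is an isomorphism}. Indeed, if $f:X\to Y$ admits a left inverse $l:Y\to X$ with $l\circ f=\mathrm{id}_X$, then $f\circ l\in\mathrm{End}_{\cC}(Y)$ is an automorphism by the EI property; since $(f\circ l)^{2}=f\circ(l\circ f)\circ l=f\circ l$, it is an idempotent automorphism and hence the identity of $Y$, so $l$ is also a right inverse of $f$. A symmetric argument handles split epimorphisms. As a consequence, any morphism $X\to Y$ in $\cC$ between isomorphic objects is automatically an isomorphism: composing with a chosen iso $Y\to X$ produces an endomorphism of $X$, which is an automorphism by EI.

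Next, let $(\alpha,\mu):\sigma\to\sigma$ be an endomorphism in $S(\cC)$ of an object $\sigma:[m]\to\cC$, represented as in the paragraph preceding the lemma by a chain $X_{0}\xrightarrow{\varphi_{0}}X_{1}\xrightarrow{\varphi_{1}}\cdots\xrightarrow{\varphi_{m-1}}X_{m}$ of non-isomorphisms in $\cC$. I claim that the functor $\alpha:[m]\to[m]$ must be the identity. Since $\alpha$ is order-preserving between two finite totally ordered sets of the same cardinality, it suffices to prove that $\alpha$ is injective. Suppose towards a contradiction that $\alpha(i)=\alpha(j)$ for some $0\le i<j\le m$. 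The components of the natural isomorphism $\mu$ give isomorphisms $\mu_{i}:X_{i}\xrightarrow{\sim}X_{\alpha(i)}$ and $\mu_{j}:X_{j}\xrightarrow{\sim}X_{\alpha(j)}=X_{\alpha(i)}$, so $X_{i}\cong X_{j}$. By the auxiliary fact the composite $\sigma(i\to j)=\varphi_{j-1}\circ\cdots\circ\varphi_{i}:X_{i}\to X_{j}$ is an isomorphism. In particular $\varphi_{i}$ becomes a split monomorphism, with left inverse $\sigma(i\to j)^{-1}\circ\varphi_{j-1}\circ\cdots\circ\varphi_{i+1}$, and the auxiliary fact again forces $\varphi_{i}$ to be an isomorphism, contradicting the hypothesis that the chain consists of non-isomorphisms.

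With $\alpha=\mathrm{id}_{[m]}$ established, the natural isomorphism $\mu:\sigma\to\sigma\circ\alpha=\sigma$ admits a componentwise inverse $\mu^{-1}$, and the pair $(\mathrm{id}_{[m]},\mu^{-1})$ is a two-sided inverse to $(\alpha,\mu)$ in $S(\cC)$. Hence every endomorphism of $\sigma$ in $S(\cC)$ is an automorphism, and $S(\cC)$ is an EI-category. The step I expect to require the most care in the write-up is the deduction that $\alpha$ is injective: it is precisely here that the faithfulness of $\sigma$ (equivalently, the non-isomorphism condition on the arrows $\varphi_{i}$) is coupled with the EI hypothesis on $\cC$ via the split monomorphism argument; the rest is routine unpacking of the definition of $S(\cC)$.
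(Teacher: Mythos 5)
Your proof is correct. Note, however, that the paper does not supply its own proof of this lemma: it simply records the statement with a pointer to Linckelmann's Proposition~1.2 in \cite{Lin05}, so there is no in-paper argument to compare against. Your write-up supplies a self-contained proof, and the structure is sound: the auxiliary observation that split monomorphisms in an EI-category are isomorphisms (via the idempotent-automorphism trick $f\circ l$, with $(f\circ l)^{2}=f\circ l$ forcing $f\circ l=\mathrm{id}$) is exactly the leverage needed. From it you correctly deduce that a morphism between isomorphic objects of an EI-category is itself an isomorphism, and hence that a chain of non-isomorphisms $X_{0}\to\cdots\to X_{m}$ cannot have $X_{i}\cong X_{j}$ for $i<j$ without forcing $\varphi_{i}$ to be an isomorphism. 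This pins down $\alpha$ to be an injective, hence identity, order-preserving self-map of $[m]$, and then the componentwise inverse $\mu^{-1}$ gives the two-sided inverse $(\mathrm{id}_{[m]},\mu^{-1})$. One small presentational remark: in the final step it is worth spelling out that composition in $S(\cC)$ of $(\mathrm{id},\mu)$ with $(\mathrm{id},\mu^{-1})$ really does yield $(\mathrm{id},\mathrm{id})$, which amounts to the observation that whiskering by the identity functor is trivial; this is routine but is the point where the definition of composition in the subdivision category actually enters.
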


For any $EI$-category $\cC$, the set  $[\mathcal{C}]$ of isomorphism classes of objects in $\cC$ is naturally endowed with the structure of a poset via the relation
\[[X]\leq [Y] \Longleftrightarrow\Hom_\mathcal{C}(X,Y) \neq \emptyset\]
for objects $X,Y$ in $\cC$. Thus it follows from Lemma \ref{lem:Subdivision is EI} that $[S(\mathcal{C})]$ is a poset and we can consider the following definition.

\begin{defin}
\label{def:orbspace}
Let $\cC$ be an $EI$-category. The \textit{orbit space} $\cO(\cC)$ of $\mathcal{C}$ is the geometric realisation of the order complex of $[S(\mathcal{\cC})]$. That is, $\mathcal{O}(\mathcal{C}):=|\Delta([S(\mathcal{C})])|.$
\end{defin}

\bibliographystyle{alpha}

\vspace{1cm}

{\mbox{\sc{FB Mathematik, RPTU Kaiserslautern--Landau, Postfach 3049, 67653 Kaiserslautern, Germany}}}

\textit{Email address:} \href{mailto:damiano.rossi.math@gmail.com}{damiano.rossi.math@gmail.com}

\vspace{1cm}

{\sc{Department of Mathematical Science, Loughborough University, LE11 3TU, UK}}

\textit{Email address:} \href{mailto:j.p.semeraro@lboro.ac.uk}{j.p.semeraro@lboro.ac.uk}

\end{document}